\newcommand{\N}{\mathbb{N}}
\newcommand{\R}{\mathbb{R}}
\newcommand{\B}{\mathbb{B}}
\newcommand{\Sphere}{\mathbb{S}}
\newcommand{\norm}[1]{\left\Vert #1 \right\Vert}
\newcommand{\abs}[1]{\left| #1 \right|}
\newcommand{\conv}{\operatorname{conv}}
\newcommand{\vol}{\operatorname{vol}}
\newcommand{\interior}{\operatorname{int}}
\newcommand{\bd}{\operatorname{bd}}
\newcommand{\pos}{\operatorname{pos}}
\newcommand{\lin}{\operatorname{lin}}
\tikzset{%
    add/.style args={#1 and #2}{
        to path={%
 ($(\tikztostart)!-#1!(\tikztotarget)$)--($(\tikztotarget)!-#2!(\tikztostart)$)%
  \tikztonodes},add/.default={.2 and .2}}
}
\tikzset{%
  mark coordinate/.style={inner sep=0pt,outer sep=0pt,minimum size=2pt,
    fill=black,circle}%
}
 \newtheorem{theorem}{Theorem}
\newtheorem{lemma}[theorem]{Lemma}
 \newtheorem{proposition}[theorem]{Proposition}
 \newtheorem{corollary}[theorem]{Corollary}
 \newtheorem{definition}[theorem]{Definition}
\newproof{proof}{Proof}
\begin{document}

\begin{frontmatter}

\title{Good Clusterings Have Large Volume}

\author[UCD]{Steffen Borgwardt}
\ead{steffen.borgwardt@ucdenver.edu}

\author[TUM]{Felix Happach}
\ead{felix.happach@tum.de}


\address[UCD]{Department of Mathematical and Statistical Sciences,
  University of Colorado Denver, USA}

\address[TUM]{Department of Mathematics and TUM School of Management, Technische~Universit\"at M\"{u}nchen, Germany}

\begin{abstract}%
The clustering of a data set is one of the core tasks in data analytics. Many clustering algorithms exhibit a strong contrast between a favorable performance in practice and bad theoretical worst-cases. Prime examples are least-squares assignments and the popular $k$-means algorithm. We are interested in this contrast and study it through polyhedral theory. Several popular clustering algorithms can be connected to finding a vertex of the so-called bounded-shape partition polytopes. The vertices correspond to clusterings with extraordinary separation properties, in particular allowing the construction of a separating power diagram, defined by its so-called sites, such that each cluster has its own cell. 

First, we quantitatively measure the space of all sites that allow construction of a separating power diagram for a clustering by the volume of the normal cone at the corresponding vertex. This gives rise to a new quality criterion for clusterings, and explains why good clusterings are also the most likely to be found by some classical algorithms. Second, we characterize the edges of the bounded-shape partition polytopes. Through this, we obtain an explicit description of the normal cones. This allows us to compute measures with respect to the new quality criterion, and even compute ``most stable'' sites, and thereby ``most stable'' power diagrams, for the separation of clusters. The hardness of these computations depends on the number of edges incident to a vertex, which may be exponential. However, the computational effort is rewarded with a wealth of information that can be gained from the results, which we highlight through some proof-of-concept computations.
\end{abstract}%

\begin{keyword} clustering; linear programming; power diagram; polyhedron; normal cone; stability

\end{keyword}

\end{frontmatter}

%


\section{Introduction}

Informed decision-making based on large data sets is one of the key challenges in Operations Research. We are interested in one of the fundamental tasks in data analytics, the {\em clustering} of a data set into disjoint clusters. Data is often represented as a finite set $X\subset \mathbb{R}^d$ in $d$-dimensional Euclidean space. A clustering $C=(C_1,\dots,C_k)$ then is a partition of the set $X$ into disjoint clusters $C_i\subset X$, such that $\bigcup\limits_{i=1}^k C_i = X$ and $C_i \cap C_j = \emptyset$. 

There is a wealth of literature on clusterings methods. We refer to three surveys by \cite{b-02,jmf-99,xw-05}. For many clustering algorithms, there is a strong contrast between an extremely favorable performance in practice and the lack of provable worst-case guarantees in theory. A prime example is the popular $k$-means algorithm proposed by \cite{l-82,m-67}. 
In practice, it typically terminates in just a few iterations and produces human-interpretable results. In a theoretical worst-case, it may take exponentially many iterations even for two-dimensional data (see \cite{v-11}), and the resulting clustering may not capture the structure of the underlying data. In the present paper, we use methods of polyhedral theory to better understand this stunning discrepancy. 

The studies of polyhedra have been a popular approach for applications in Operations Research. There are many cases where the combinatorial properties of these polyhedra revealed deeper insight into the underlying applications, c.f. \cite{AnderesBorgwardt2016, BriedenGritzmann2012, DeLoeraKim2013, Doignon1997, Groetschel1990, GuralnickPerkinson2006, Kalman2014, Onn1993, Suck1992}.
For an introduction to polyhedral theory, we recommend \cite{NemhauserWolseyBuch, SchrijverBuch, Ziegler1995}. Further, we refer the reader to the book review by \cite{Suck1997} of the classical textbook {\em Lectures on Polytopes} by \cite{Ziegler1995} for an in-depth account of polyhedral theory and its applications.

The so-called {\em assignment polytopes} are closely related to our setting and have been studied well, c.f. \cite{Balinski1974,GillLinusson2009, GottliebRao1990}. It is possible to represent the partition of data points $X = \{x_1,\dots,x_n\}\subset \mathbb{R}^d$ into clusters $C_1,\dots,C_k$ by using decision variables $y_{ij}$ to indicate whether data point $x_j$ is assigned to cluster $C_i$ ($y_{ij}=1$) or not ($y_{ij}=0$). Further, many applications specify lower bounds $s_i^-$ and upper bounds $s_i^+$ on the number of points that may be assigned to cluster $C_i$. This gives rise to a simple set of linear constraints that describes all clusterings:
$$
\begin{array}{lcrclcl}
s_i^- & \leq  & \sum\limits_{j=1}^{n}  y_{ij}    & \leq & s_i^+ & \quad & (i\leq k)\\
                         &      & \sum\limits_{i=1}^k y_{ij} & =    &  1
                         & \quad & (j\leq n)\\
                         &      &          y_{ij}
                         & \geq  & 0                   & \quad & (i \leq k,
                         j \leq n).
\end{array}
$$
The first set of constraints makes sure the prescribed cluster size bounds are respected, the second set of constraints guarantees that each data point is assigned to a cluster. With the relaxed constraints $y_{ij}\geq 0$, we obtain a polytope $P$. The coefficient matrix of the constraints is totally unimodular, and the vector on the right-hand side is integral, so the vertices of this polytope are $0,1$-vectors, i.e. all $y_{ij}$ satisfy $y_{ij}\in\{0,1\}$. Each vertex describes a clustering, and vice versa.

To identify ``good'' clusterings, we study a projection of $P$ that includes information on the locations of the data points $X$ in $\mathbb{R}^d$. This projection, the so-called \textbf{bounded-shape partition polytope}, was first introduced in \cite{Barnes1992,Hwang1998}. (For a formal definition, see Section $2.3$.) 
The vertices of bounded-shape partition polytopes exhibit several favorable properties, c.f. \cite{Barnes1992}, such as being a minimizer of the least-squares functional among all clusterings of the same cluster sizes. In particular, these vertices have strong separation properties. They allow for the construction of a {\em separating power diagram}, a generalized Voronoi diagram in $\mathbb{R}^d$ with one polyhedral cell for each cluster. In machine learning and other parts of Operations Research, this separation property is sometimes called {\em piecewise-linear separability}, c.f. \cite{bm-92}. Power diagrams are defined by a set of {\em sites}, $d$-dimensional vectors (one for each cell) which can be seen as the ``centers'' of the cells. See Figure \ref{fig:powerdiagramclustering} for a small example and Section \ref{subsec-clusterings} for a formal defintion.

\begin{figure}
\begin{center}
\begin{tikzpicture}
        
	\coordinate (A1) at (1,0);%
    \coordinate (A2) at (1,-1);%
    \coordinate (A3) at (2,0);%
    \coordinate (A4) at (0,-1);%
    \coordinate (B1) at (-2,0);%
    \coordinate (B2) at (-2,-2);%
    \coordinate (B3) at (-3,-1);%
    \coordinate (B4) at (-3,1);%
    \coordinate (C1) at (-1,2);%
    \coordinate (C2) at (-1,3);%
    \coordinate (C3) at (0,3);%
    \coordinate (C4) at (1,2);%
	\coordinate (SA) at (0.75,-0.5);
	\coordinate (SB) at (-2.52,-0.735);
	\coordinate (SC) at (-0.5,2.25);
    
    \coordinate (X) at  (intersection of 0,0--13,-9 and 0,-8--1,-22);
    \draw (X) -- (-0.357,-3); 
    \draw (X) -- (3,1.916); 
    \draw (X) -- (-3,2.077); 
    
     \fill (A1) circle (0.1);
     \fill (A2) circle (0.1);
     \fill (A3) circle (0.1);
     \fill (A4) circle (0.1);
     \fill[blue] (B1) circle (0.1);
     \fill[blue] (B2) circle (0.1);
     \fill[blue] (B3) circle (0.1);
     \fill[blue] (B4) circle (0.1);
     \fill[red] (C1) circle (0.1);
     \fill[red] (C2) circle (0.1);
     \fill[red] (C3) circle (0.1);
     \fill[red] (C4) circle (0.1);
     \fill (SA) circle (0.05);
   	 \fill[blue] (SB) circle (0.05);
     \fill[red] (SC) circle (0.05);
\end{tikzpicture}
\caption{A separating power diagram for three clusters. The data points of each cluster lie in the interior of their respective cells. The small dots indicate the respective sites.}
\label{fig:powerdiagramclustering}
\end{center}
\end{figure}
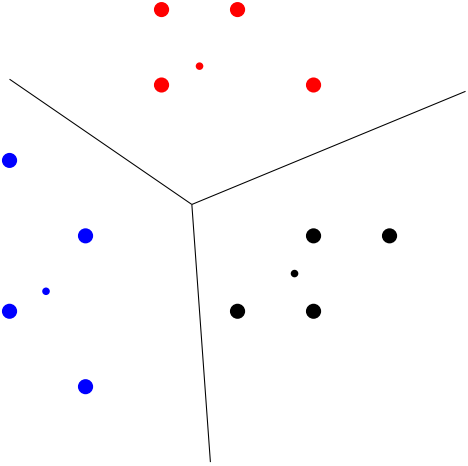

\subsection*{Contributions and Outline}
In Section $2$, we introduce some notation and review related work. We then use the known vertex characterization of \cite{Barnes1992} as a starting point for the new contributions in this paper. We briefly outline these contributions in the following paragraphs. Section $3$ is a complete and self-contained presentation of our results. Section $4$ contains the necessary proofs. We conclude our discussion with some final remarks in Section $5$. Parts of this work are based on the first author's Ph.D. dissertation (\cite{Borgwardt2010}) and the second author's M.Sc. thesis (\cite{Happach2016}).

\noindent{\bf Section 3.1. } First, we devise {\bf a new measure for the quality of a clustering}. We call it the \textbf{volume of a clustering}, since it is a measure for the volume of the normal cone of the vertex encoding this particular clustering in the bounded-shape and single-shape partition polytopes. (The volume of a cone is the standard (finite) Lebesgue measure of the cone intersected with the unit sphere.) This measure is quite different from the stability measures used in the literature, such as least-squares functionals and margins (see \cite{Luxburg2010}). We will exhibit why a large volume indicates a clustering of high quality and distinguish it from the classical measures. This provides an informal {\bf explanation why many clustering algorithms work well in practice}: For example, the computation of a least-squares assignment for fixed cluster sizes is in one-to-one correspondence to linear optimization over a single-shape partition polytope, see \cite{Borgwardt2010}. Further, the $k$-means algorithm can be interpreted as the repeated computation of least-squares assignments with changing sites in each iteration. When choosing random sites, the chance to find a clustering is directly correlated with its volume. This means that clusterings of large volume are found most often, so {\bf the best clusterings are the most likely to be found}.

\noindent{\bf Section 3.2. } Second, we devise an {\bf explicit representation of the normal cones} of vertices of the bounded-shape partition polytope. This is a challenging task, as there is no explicit description of the facets of the polytope. In a generalization of results by \cite{Fukuda2003}, we {\bf characterize the edges of all bounded-shape partition polytopes}. They correspond to so-called {\bf movements or cyclical movements} of items between clusters. This characterization enables us to construct the normal cone of a vertex explicitely and to investigate its structure. By this, we can identify convex areas which contain a representative site for all power diagrams inducing this clustering. We provide some proof-of-concept computations and a running example, in which we compare clusterings of different volumes.

\noindent{\bf Section 3.3. } Finally, we introduce a new \textbf{stability criterion for sites} for a clustering and provide an algorithm for the {\bf computation of optimal sites} in the sense of this stability criterion. These sites are maximally stable with respect to perturbation, i.e. all sites can be perturbed in any direction with a largest possible amount without changing the clustering. We use a classical approach from computational geometry to find such sites: We roll a $p$-norm unit ball into the normal cone and compute where it gets ``stuck''. The center of the ball gives the desired sites. This computation is readily expressed as a mathematical program. Hardness of the computation comes from the fact that there can be exponentially many edges. Of course, this hardness is not surprising -- there are related problems, like the {\em $k$-means problem}, for which the complexity of finding a global optimum (globally optimal sites) is known to be $\mathbb{NP}$-hard, even for $k=2$ (see \cite{adhp-09}) or for data in the Euclidean plane, c.f. \cite{mnv-12}.

\section{Preliminaries}\label{sec-preliminaries}

We begin with some standard notation. Let $A \subseteq \R^d$. Then $\lin(A)$ denotes the minimal linear subspace containing $A$. If $A$ is convex, we call $\interior(A)$ the \textbf{interior} and $\bd(A)$ the \textbf{boundary} of $A$.

\subsection{Clusterings, Least-Squares Assignments and Power Diagrams}\label{subsec-clusterings}

Let throughout this paper $n,d,k \in \N := \{1,2,\dots\}$ be fixed. Let $X := \{x_1,\dots,x_n\} \subseteq \R^d$ be a set of $n$ distinct non-zero data points and for $m \in \N$ define $[m] := \{1,\dots,m\}$. We call a partition $C:=(C_1,\dots,C_k)$ of $X$ a \textbf{clustering} and call $\abs{C}:=(\abs{C_1},\dots,\abs{C_k})$ its \textbf{shape}. For $i \in [k]$, we call $C_i$ the \textbf{$i$-th cluster} of $C$ and $\abs{C_i}$ its \textbf{size}.  Let $s^{-} := (s^{-}_1,\dots,s^{-}_k)$, $s^{+} := (s_1^{+},\dots,s_k^{+}) \in \N^k$ such that $0 \leq s^{-}_i \leq s^{+}_i \leq n$ for all $i \in [k]$ be the lower and upper bounds on the cluster sizes.

A clustering $C$ is said to be \textbf{feasible} if it satisfies $s^{-} \leq \abs{C} \leq s^{+}$ componentwisely. We will only consider feasible clusterings in this paper. $C$ is called \textbf{separable} if all pairs of clusters are linearly separable, i.e.\ for all $i, j \in [k]$, $i \not= j$ there is $a_{ij} \in \R^d$ and $\gamma_{ij} \in \R$ such that $a_{ij}^T x \leq \gamma_{ij} \leq a_{ij}^T y$ for all $x \in C_i$, $y \in C_j$.
The hyperplane separating the clusters is denoted by $H_{(a_{ij},\gamma_{ij})} := \{ x \in \R^d \, | \, a_{ij}^T x = \gamma_{ij} \}$. Analogously, we define $H^{\leq}_{(a_{ij},\gamma_{ij})}$ and $H^{\geq}_{(a_{ij},\gamma_{ij})}$ to be the respective half-spaces.
A \textbf{constrained least-squares assignment (LSA)} for a given set of \textbf{sites} $a_1,\dots,a_k \in \R^d$ is a clustering $C = (C_1,\dots,C_k)$ minimizing 
\begin{equation}\label{LSA}
\sum_{i = 1}^k \sum_{x \in C_i} \norm{x - a_i}_2^2
\end{equation}
over all clusterings with the same shape as $C$. If $C$ minimizes (\ref{LSA}) over all feasible clusterings, we say $C$ is a \textbf{(general) LSA}. We call $a=(a_1^T,\dots,a_k^T)^T \in \R^{d \cdot k}$ the \textbf{site vector}. 

Recall that power diagrams are a generalization of the well-known Voronoi diagrams (see \cite{Aurenhammer1987}). There are several equivalent definitions of power diagrams. We briefly recall the definition that is best for our purposes (see \cite{Borgwardt2015}). Let $a:=(a_1^T,\dots,a_k^T)^T \in \R^{d \cdot k}$ be a site vector with distinct sites $a_1,\dots,a_k \in \R^d$ and let $\alpha_1,\dots,\alpha_k \in \R$. For $i \in [k]$, we call
\begin{equation*}
P_i := \{ x \in \R^d \, | \, (a_j - a_i)^T x \leq \alpha_i - \alpha_j \, \text{ for all } j \in [k] \setminus \{i\} \}
\end{equation*}
the \textbf{$i$-th cell} of the \textbf{power diagram} $(P_1,\dots,P_k)$.

\cite{Aurenhammer1998} showed the following connection between constrained LSAs and power diagrams: If $C$ is a constrained LSA to the site vector $a$, then there is a power diagram with site vector $a$ satisfying $C_i \subseteq \interior(P_i)$ for all $i \in [k]$. On the other hand, if a power diagram $(P_1,\dots,P_k)$ with site vector $a$ satisfies $C_i \subseteq P_i$ for all $i \in [k]$, then $C$ is a constrained LSA to the site vector $a$. If $C_i \subseteq P_i$ for all $i \in [k]$, we say the power diagram \textbf{induces} the clustering and call it a \textbf{separating power diagram}. 

\subsection{Movements Between Clusterings}\label{subsec-movements}

In order to compare two clusterings $C := (C_1,\dots,C_k)$, $C' :=(C'_1,\dots,C'_k)$, we define the \textbf{clustering difference graph (CDG)} to be the labeled directed multigraph $CDG(C,C') :=(V,E)$ with node set $V := [k]$ and edge set $E$ constructed as follows: For each $x_j \in C_i \cap C'_l$ with distinct $i,l \in [k]$, there is an edge $(i,l) \in E$ with label $x_j$.
W.l.o.g.\ we delete isolated nodes in the CDG, since these would correspond to clusters that are identical in $C$ and $C'$. We can derive $C'$ from $C$ by applying operations corresponding to the edges of $CDG(C,C')$.

Let $(i_1,i_2)-(i_2,i_3)-\cdots-(i_{t},i_{t+1})$ be an edge path in $CDG(C,C')$ with labels $x_{j_1},\dots,x_{j_t}$. Applying the \textbf{movement}
\begin{equation*}
M: \quad C_{i_1} \stackrel{x_{j_1}}\longrightarrow C_{i_2} \stackrel{x_{j_2}}\longrightarrow \cdots \stackrel{x_{j_{t}}}\longrightarrow C_{i_{t+1}}
\end{equation*} 
to $C$ means deriving the clustering $\bar{C}=(\bar{C}_1,\dots,\bar{C}_k)$ by setting 
$\bar{C}_{i_l} := (C_{i_l} \setminus \{x_{j_l}\}) \cup \{x_{j_{l-1}}\}$ for all $ l \in \{2,\dots,t\}$,
$\bar{C}_{i_1} := C_{i_1} \setminus \{x_{j_1}\}$,
$\bar{C}_{i_{t+1}} := C_{i_{t+1}} \cup \{x_{j_{t}}\}$ and
$\bar{C}_r := C_r$ for all $r \in [k] \setminus \{i_1,\dots,i_{t+1}\}$.

If $i_{t+1} = i_1$, i.e. in case of a cycle, we speak of a \textbf{cyclical movement}. We then obtain $\bar{C}_{i_1} = \bar{C}_{i_{t+1}} :=(C_{i_1}\setminus \{x_{j_1}\}) \cup \{x_{j_t}\}$ and all cluster sizes remain the same. The \textbf{inverse (cyclical) movement} $M^{-1}$ is defined via the corresponding path (cycle) $(i_{t+1},i_t)-(i_t,i_{t-1})-\cdots-(i_2,i_1)$ in $CDG(C',C)$.
Clearly, one can obtain any clustering from any other clustering by (greedily) decomposing their clustering difference graph into paths and cycles and applying the corresponding (cyclical) movements to $C$. If $\abs{C} = \abs{C'}$, then $CDG(C,C')$ decomposes into cycles, i.e. cyclical movements suffice to transform $C$ into $C'$, c.f. \cite{Borgwardt2010}.

Figures \ref{fig:examplemovement} and \ref{fig:examplecyclicalexchange} depict a clustering of twelve data points in $\R^2$ with $C_1$ black, $C_2$ blue, $C_3$ red, as well as two clusterings which can be derived from it by applying a movement (Figure \ref{fig:examplemovement}) and a cyclical movement (Figure \ref{fig:examplecyclicalexchange}), respectively. 

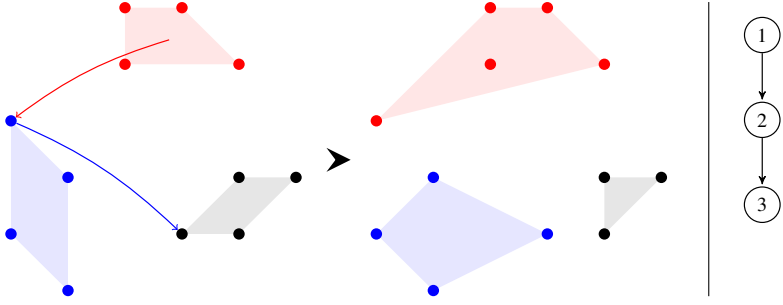
\begin{figure}
\begin{center}
\scalebox{0.75}{
\begin{tikzpicture}
	\coordinate (A1) at (1,0);%
    \coordinate (A2) at (1,-1);%
    \coordinate (A3) at (2,0);%
    \coordinate (A4) at (0,-1);%
    \coordinate (B1) at (-2,0);%
    \coordinate (B2) at (-2,-2);%
    \coordinate (B3) at (-3,-1);%
    \coordinate (B4) at (-3,1);%
    \coordinate (C1) at (-1,2);%
    \coordinate (C2) at (-1,3);%
    \coordinate (C3) at (0,3);%
    \coordinate (C4) at (1,2);%
    
     \fill[black!10] (A1) -- (A3) -- (A2) -- (A4) -- cycle;
     \fill[blue!10] (B1) -- (B2) -- (B3) -- (B4) -- cycle;
     \fill[red!10] (C1) -- (C2) -- (C3) -- (C4) -- cycle;    
    
     \fill (A1) circle (0.1);
     \fill (A2) circle (0.1);
     \fill (A3) circle (0.1);
     \fill (A4) circle (0.1);
     \fill[blue] (B1) circle (0.1);
     \fill[blue] (B2) circle (0.1);
     \fill[blue] (B3) circle (0.1);
     \fill[blue] (B4) circle (0.1);
     \fill[red] (C1) circle (0.1);
     \fill[red] (C2) circle (0.1);
     \fill[red] (C3) circle (0.1);
     \fill[red] (C4) circle (0.1);
     
     \node (1) at (0,2.5) {};
     \path[->, semithick, shorten >=3pt, shorten <= 3pt] (B4) edge[bend left=10, blue] (A4);
     \path[->, semithick, shorten >=3pt, shorten <= 3pt] (1) edge[bend right=10, red] (B4);
\end{tikzpicture}
}
\hspace*{0.3cm}
\put(0,50){\tikz \draw[->,line width=1mm,>=stealth] (0,50)--(4*\unitlength,50);}
\hspace*{0.3cm}
\scalebox{0.75}{
\begin{tikzpicture}
	\coordinate (A1) at (1,0);%
    \coordinate (A2) at (1,-1);%
    \coordinate (A3) at (2,0);%
    \coordinate (A4) at (0,-1);%
    \coordinate (B1) at (-2,0);%
    \coordinate (B2) at (-2,-2);%
    \coordinate (B3) at (-3,-1);%
    \coordinate (B4) at (-3,1);%
    \coordinate (C1) at (-1,2);%
    \coordinate (C2) at (-1,3);%
    \coordinate (C3) at (0,3);%
    \coordinate (C4) at (1,2);%
    
     \fill[black!10] (A1) -- (A3) -- (A2) -- cycle;
     \fill[blue!10] (B1) -- (A4) -- (B2) -- (B3) -- cycle;
     \fill[red!10] (B4) -- (C2) -- (C3) -- (C4) -- cycle; 
    
     \fill (A1) circle (0.1);
     \fill (A2) circle (0.1);
     \fill (A3) circle (0.1);
     \fill[blue] (A4) circle (0.1);
     \fill[blue] (B1) circle (0.1);
     \fill[blue] (B2) circle (0.1);
     \fill[blue] (B3) circle (0.1);
     \fill[red] (B4) circle (0.1);
     \fill[red] (C1) circle (0.1);
     \fill[red] (C2) circle (0.1);
     \fill[red] (C3) circle (0.1);
     \fill[red] (C4) circle (0.1);
\end{tikzpicture}
}
\hspace*{0.3cm}
\vline
\hspace*{0.3cm}
\scalebox{0.75}{
\begin{tikzpicture}[->,>=stealth',shorten >=1pt,auto,node distance=1.5cm,semithick]
\node[circle, draw] (1) {$1$};
\node[circle, draw] (2) [below of=1] {$2$};
\node[circle, draw] (3) [below of=2] {$3$};
\node (4) [below of=3]{};

\path 	(1) edge (2)
		(2) edge (3);
\end{tikzpicture}
}
\caption{Application of a movement, and the corresponding CDG for the two clusterings.}
\label{fig:examplemovement}
\end{center}
\end{figure}

\begin{figure}
\begin{center}
\scalebox{0.75}{
\begin{tikzpicture}
	\coordinate (A1) at (1,0);%
    \coordinate (A2) at (1,-1);%
    \coordinate (A3) at (2,0);%
    \coordinate (A4) at (0,-1);%
    \coordinate (B1) at (-2,0);%
    \coordinate (B2) at (-2,-2);%
    \coordinate (B3) at (-3,-1);%
    \coordinate (B4) at (-3,1);%
    \coordinate (C1) at (-1,2);%
    \coordinate (C2) at (-1,3);%
    \coordinate (C3) at (0,3);%
    \coordinate (C4) at (1,2);%
    
     \fill[black!10] (A1) -- (A3) -- (A2) -- (A4) -- cycle;
     \fill[blue!10] (B1) -- (B2) -- (B3) -- (B4) -- cycle;
     \fill[red!10] (C1) -- (C2) -- (C3) -- (C4) -- cycle;    
    
     \fill (A1) circle (0.1);
     \fill (A2) circle (0.1);
     \fill (A3) circle (0.1);
     \fill (A4) circle (0.1);
     \fill[blue] (B1) circle (0.1);
     \fill[blue] (B2) circle (0.1);
     \fill[blue] (B3) circle (0.1);
     \fill[blue] (B4) circle (0.1);
     \fill[red] (C1) circle (0.1);
     \fill[red] (C2) circle (0.1);
     \fill[red] (C3) circle (0.1);
     \fill[red] (C4) circle (0.1);
     
     \path[->, semithick, shorten >=3pt, shorten <= 3pt] (B4) edge[bend left=10, blue] (A4);
     \path[->, semithick, shorten >=3pt, shorten <= 3pt] (A4) edge[bend left=10, black] (C4);
     \path[->, semithick, shorten >=3pt, shorten <= 3pt] (C4) edge[bend left=10, red] (B4);
\end{tikzpicture}
}
\hspace*{0.3cm}
\put(0,50){\tikz \draw[->,line width=1mm,>=stealth] (0,50)--(4*\unitlength,50);}
\hspace*{0.3cm}
\scalebox{0.75}{
\begin{tikzpicture}
	\coordinate (A1) at (1,0);%
    \coordinate (A2) at (1,-1);%
    \coordinate (A3) at (2,0);%
    \coordinate (A4) at (0,-1);%
    \coordinate (B1) at (-2,0);%
    \coordinate (B2) at (-2,-2);%
    \coordinate (B3) at (-3,-1);%
    \coordinate (B4) at (-3,1);%
    \coordinate (C1) at (-1,2);%
    \coordinate (C2) at (-1,3);%
    \coordinate (C3) at (0,3);%
    \coordinate (C4) at (1,2);%

	\fill[black!10] (A1) -- (A2) -- (A3) -- (C4) -- cycle;
	\fill[blue!10] (B1) -- (A4) -- (B2) -- (B3) -- cycle;
	\fill[red!10] (C1) -- (C3) -- (C2) -- (B4) -- cycle;      
        
     \fill (A1) circle (0.1);
     \fill (A2) circle (0.1);
     \fill (A3) circle (0.1);
     \fill[blue] (A4) circle (0.1);
     \fill[blue] (B1) circle (0.1);
     \fill[blue] (B2) circle (0.1);
     \fill[blue] (B3) circle (0.1);
     \fill[red] (B4) circle (0.1);
     \fill[red] (C1) circle (0.1);
     \fill[red] (C2) circle (0.1);
     \fill[red] (C3) circle (0.1);
     \fill (C4) circle (0.1);

\end{tikzpicture}
}
\hspace*{0.3cm}
\vline
\hspace*{0.3cm}
\scalebox{0.75}{
\begin{tikzpicture}[->,>=stealth',shorten >=1pt,auto,node distance=1.5cm,semithick]
\node[circle, draw] (1) {$1$};
\node[circle, draw] (2) [below of=1] {$2$};
\node[circle, draw] (3) [below of=2] {$3$};
\node (4) [below of=3]{};

\path 	(1) edge (2)
		(2) edge (3)
		(3) edge[bend right] (1);
\end{tikzpicture}
}
\caption{Application of a cyclical movement, and the corresponding CDG for the two clusterings.}
\label{fig:examplecyclicalexchange}
\end{center}
\end{figure}

\subsection{Bounded-Shape and Single-Shape Partition Polytopes}\label{subsec-boundedshapepartitionpolytope}

The polytope we are studying was introduced in \cite{Barnes1992} and \cite{Hwang1998}:

For a clustering $C = (C_1,\dots,C_k)$ and $i \in [k]$, let $\sigma_i := \sum\limits_{x \in C_i} x \in \R^d$. The \textbf{clustering vector} of $C$ is $w(C) := (\sigma_1^T,\dots,\sigma_k^T)^T \in \R^{d \cdot k}$. Then $\mathcal{P}^{\pm} (X,k,s^{-},s^{+}) := \conv(\{w(C) \ | \ C $ is feasible$\})$ is called the \textbf{bounded-shape partition polytope}. If $s= s^{-} = s^{+}$, then we call $\mathcal{P}^{=}_s = \mathcal{P}^{\pm}(X,k,s,s)$ the \textbf{single-shape partition polytope}. Another interesting special case is the \textbf{all-shape partition polytope} $\mathcal{P}$ investigated by \cite{Fukuda2003} which is obtained by choosing $s^{-} = (0,\dots,0)$ and $s^{+} = (n,\dots,n)$. When the bounds are clear from the context, we use the simpler notation $\mathcal{P}^{\pm} = \mathcal{P}^{\pm}(X,k,s^-,s^+)$ and $\mathcal{P}^{=} = \mathcal{P}^{=}_s$.

Note that $\mathcal{P}^{\pm}$ is a projection of the generalized assignment polytope investigated in \cite{GottliebRao1990}. We want to stress that, since these polytopes are defined as a convex hull, we do not have explicit information on (facet-defining) valid inequalities. This will play an important role in our  discussion, as we have to study the edge structure of the polytopes (which may have exponential size) in order to construct the normal cones of its vertices. 
We use the notation $N_P(v) := \{ a \in \R^d \ | \ a^T v \geq a^T x \ \forall \, x \in P\}$ for the \textbf{normal cone} of $v$ in a polytope $P$. 

We observe the following connection between single-shape and bounded-shape partition polytopes.

\begin{lemma}\label{union}
$\mathcal{P}^{\pm}(X,k,s^-,s^+) = \conv(\bigcup\limits_{s^- \leq s \leq s^+} \mathcal{P}^{=}_s)$.
\end{lemma}

\proof{Proof.}
Let $U := \bigcup\limits_{s^- \leq s \leq s^+} \mathcal{P}^{=}_s$ and let $w(C) \in \mathcal{P}^{\pm}$ be a clustering vector. Then $w(C) \in \mathcal{P}^{=}_{\abs{C}} \subseteq U$. By definition of $\mathcal{P}^{\pm}$, we thus obtain $\mathcal{P}^{\pm} \subseteq \conv(U)$.
On the other hand, clearly $\mathcal{P}^{=}_s \subseteq \mathcal{P}^{\pm}$ for all $s^- \leq s \leq s^+$ and thus $U \subseteq \mathcal{P}^{\pm}$. Taking the convex hull on both sides yields $\conv(U) \subseteq \conv(\mathcal{P}^{\pm}) = \mathcal{P}^{\pm}$ where the last equality is due to the convexity of $\mathcal{P}^{\pm}$. \hfill \qed
\endproof

Note that Lemma \ref{union} implies that every vertex $w(C)$ of $\mathcal{P}^{\pm}$ is also a vertex of $\mathcal{P}^{=}_{\abs{C}}$ and that $N_{\mathcal{P}^{\pm}}(w(C)) \subseteq N_{\mathcal{P}^{=}}(w(C))$ for every clustering vector $w(C)$.
\cite{Barnes1992} gave a first characterization of the vertices of $\mathcal{P}^{\pm}$.

\begin{proposition}[Barnes, Hoffman, Rothblum 1992] \label{boundedshapevertexcharacterization}
The clustering vector $w(C)$ of a clustering $C$ is a vertex of $\mathcal{P}^{\pm}$ if and only if there are $a := (a^T_1,\dots,a_k^T)^T \in \R^{d \cdot k}$ and $\alpha_1,\dots,\alpha_k \in \R$ satisfying the following statements.

\begin{enumerate}
\item If $\abs{C_i} > s^{-}_i$ for $i \in [k]$, then $\alpha_i \leq 0$.
\item If $\abs{C_i} < s^{+}_i$ for $i \in [k]$, then $\alpha_i \geq 0$.
\item If $x_l \in C_i$ for $l \in [n], i \in [k]$, then for all $j \in [k] \setminus \{i\}$ it holds $(a_j - a_i)^T x_l < \alpha_i - \alpha_j$.
\end{enumerate}
\end{proposition}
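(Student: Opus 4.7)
My plan is to translate vertex membership into linear programming duality. The bounded-shape partition polytope is the image $\mathcal{P}^\pm = \pi(P)$ of the constraint polytope $P$ in $y$-space under $\pi(y)_i := \sum_j y_{ij} x_j$, so I apply LP duality to
\[
\max\bigl\{\langle a,\pi(y)\rangle : y \in P\bigr\} \;=\; \max\Bigl\{\sum_{i,j}(a_i^T x_j)\, y_{ij} : y \in P\Bigr\}.
\]
Writing $\mu_i\ge 0$, $\nu_i\ge 0$, $\lambda_j\in\R$ for the duals of $\sum_j y_{ij}\le s_i^+$, $-\sum_j y_{ij}\le -s_i^-$ and $\sum_i y_{ij}=1$, and setting $\alpha_i:=\nu_i-\mu_i$, the dual constraint $\mu_i-\nu_i+\lambda_j\ge a_i^T x_j$ becomes $\lambda_j\ge a_i^T x_j+\alpha_i$. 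Under this identification, complementary slackness on the two size constraints is precisely (i) and (ii), dual feasibility is the nonstrict form of (iii), and the strict form of (iii) is exactly what upgrades $w(C)$ from a maximizer to the \emph{unique} projected maximizer.

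For the direction ``$(a,\alpha)$ with (i)--(iii) exists $\Rightarrow w(C)$ is a vertex'', I set $\mu_i:=\max(-\alpha_i,0)$, $\nu_i:=\max(\alpha_i,0)$ and $\lambda_j:=a_{i(j)}^T x_j+\alpha_{i(j)}$, with $i(j)$ the cluster of $x_j$ in $C$. Dual feasibility follows from nonstrict (iii); complementary slackness on the size constraints follows from the contrapositives of (i) and (ii) combined with the feasibility bound $s_i^-\le|C_i|\le s_i^+$ (so $\alpha_i>0\Rightarrow|C_i|=s_i^-$, $\alpha_i<0\Rightarrow|C_i|=s_i^+$). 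Hence $y^C$ is LP-optimal. If any other $y^*\in P$ were LP-optimal, complementary slackness would force every support pair $(i,j)$ of $y^*$ to saturate the dual inequality; strict (iii) then forces $x_j\in C_i$, and $\sum_i y^*_{ij}=1$ gives $y^*=y^C$. So $w(C)$ is the unique maximizer of $\langle a,\cdot\rangle$ on $\mathcal P^\pm$ and hence a vertex.

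Conversely, if $w(C)$ is a vertex, pick $a$ with $\langle a,\cdot\rangle$ uniquely maximized at $w(C)$ on $\mathcal P^\pm$; then $y^C$ is LP-optimal. LP duality produces $(\mu,\nu,\lambda)$ and $\alpha_i:=\nu_i-\mu_i$. Complementary slackness supplies (i) and (ii), and dual feasibility supplies (iii) with $\le$. Strictness is the delicate step. Suppose $(a_{i'}-a_i)^T x_j=\alpha_i-\alpha_{i'}$ for some $x_j\in C_i$ and $i'\ne i$. When $|C_i|>s_i^-$, $|C_{i'}|<s_{i'}^+$ and $\alpha_i=\alpha_{i'}$, moving $x_j$ from $C_i$ to $C_{i'}$ produces a feasible clustering of equal LP value but different $w$-vector (since $x_j\ne 0$), contradicting uniqueness.

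The main obstacle is that a size bound may block this single movement, or the forcing $\alpha_i=\alpha_{i'}$ may fail. One then has to assemble a \emph{cyclical movement} in the sense of Section~\ref{subsec-movements}: a closed walk $i_1\to i_2\to\cdots\to i_t\to i_1$ in the auxiliary graph whose edge from $i_l$ to $i_{l+1}$ labeled $x_{j_l}$ records $x_{j_l}\in C_{i_l}$ with the dual at $(i_{l+1},j_l)$ tight. Such a cycle preserves all cluster sizes (hence LP-feasibility), preserves the LP value because the per-step contributions $\alpha_{i_l}-\alpha_{i_{l+1}}$ telescope to zero, and alters $w$ because the data points are distinct. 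Guaranteeing the existence of such a tight cycle is the technical heart of the argument; the cleanest workaround is to first pick $a$ in the relative interior of $N_{\mathcal P^\pm}(w(C))$ at a generic element for which the optimal dual has no tight inequalities outside the support of $y^C$, which directly delivers (iii) in its strict form.
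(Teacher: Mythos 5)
You should first note that the paper itself does not prove this proposition; it is quoted from Barnes, Hoffman and Rothblum \cite{Barnes1992} (with details in \cite{Hwang1998}), so there is no internal proof to compare against. Your LP-duality framing is the natural one and matches the spirit of the original source. The sufficiency direction of your argument is complete and correct: the constructed dual $(\mu,\nu,\lambda)$ is feasible by the nonstrict part of (iii), complementary slackness on the size constraints follows from (i), (ii) and feasibility of $C$, and the strict form of (iii) forces any alternative optimal $y^*$ to coincide with $y^C$, so $w(C)$ is the unique maximizer of $a^Tw$ over $\mathcal{P}^{\pm}=\pi(P)$ and hence a vertex.

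The genuine gap is in the necessity direction, exactly at the strictness step you flag. The cyclical-movement route does not close as sketched: a tight dual constraint $(i',j)$ outside the support of $y^C$ does not by itself yield an alternative optimal clustering --- dual degeneracy can produce tight off-support constraints with no alternative primal optimum, and the size bounds or the failure of the telescoping condition $\sum_l(\alpha_{i_l}-\alpha_{i_{l+1}})=0$ can block every candidate (cyclical) movement --- so no contradiction is reached, and that particular dual optimum may genuinely violate strict (iii). The proposed workaround (``pick a generic $a$ for which the optimal dual has no tight inequalities outside the support of $y^C$'') asserts precisely what must be proved. To close it you need two ingredients. First, for $a\in\relint(N_{\mathcal{P}^{\pm}}(w(C)))$ the optimal face of the lifted LP over $P$ is the single point $y^C$: the optimal face is a face of the integral polytope $P$, hence the convex hull of its $0,1$-points; each such point is a feasible clustering $C'$ whose vector must equal the unique maximizer $w(C)$; and an argument as in Lemma~\ref{vectorscollinear} shows $C'\neq C$ would force some movement $M$ with $w(M)=0$, which is impossible because the data points are distinct and nonzero. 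Second, Goldman--Tucker strict complementarity applied to this LP: since the optimal face is $\{y^C\}$, the strictly complementary primal partner is $y^C$ itself, and the accompanying dual optimum makes every off-support constraint strict, which is exactly (iii). Without these two steps the strict inequality in (iii) --- the only part of the proposition beyond routine complementary slackness --- is not established.
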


A proof is given in \cite{Barnes1992} and, with more technical detail, in \cite{Hwang1998}. We call a clustering corresponding to a vertex of $\mathcal{P}^{\pm}$ a \textbf{vertex clustering}. Condition $3$ states linear separability of the clusters with separation directions $a_{ij} := a_j - a_i \in \R^d$ and right-hand sides $\gamma_{ij} := \alpha_i - \alpha_j$ for all $i,j \in [k]$. This implies the existence of a separating power diagram. This property can alternatively be derived from the observation that computation of an optimal constrained LSA corresponds precisly to linear optimization over the corresponding single-shape partition polytope $\mathcal{P}^{=}$ (see \cite{Borgwardt2010}). 

If the scalars $\alpha_1,\dots,\alpha_k$ additionally satisfy conditions $1$ and $2$, then the resulting separation fulfills some additional properties. For example, if two clusters satisfy $s^- < \abs{C_i}, \abs{C_j} < s^+$, then $C_i$ and $C_j$ are ``$0$-separable'' (see \cite{Aviran2002}), in particular they can be separated by a hyperplane containing the origin. Any vector $a \in \interior(N_{\mathcal{P}^{\pm}}(w(C)))$ can be chosen to construct suitable $\alpha_1,\dots,\alpha_k$ such that the properties of Proposition \ref{boundedshapevertexcharacterization} are satisfied, c.f. \cite{Barnes1992}.

This gives the following corollary (see \cite{Borgwardt2010, Happach2016}).

\begin{corollary}\label{boundedshapepartitionpowerdiagram}
Let $C:=(C_1,\dots,C_k)$ such that $w(C)$ is a vertex of $\mathcal{P}^{\pm}$ and let $a :=(a_1^T,\dots,a_k^T)^T \in N_{\mathcal{P}^{\pm}}(w(C)) \subseteq \R^{d \cdot k}$. Then there is a separating power diagram $(P_1,\dots,P_k)$ with site vector $a \in \R^{d \cdot k}$ such that $C_i \subseteq P_i$ for all $i \in [k]$.
If $a \in \interior(N_{\mathcal{P}^{\pm}}(w(C)))$, then $C_i \subseteq \interior(P_i)$ for all $i \in [k]$. If $a \in \bd(N_{\mathcal{P}^{\pm}}(w(C)) \subseteq \R^{d \cdot k}$, then there is an index $i \in [k]$ such that $C_i \cap \bd(P_i) \not= \emptyset$.
\end{corollary}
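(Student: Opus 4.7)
The plan is to translate the membership $a \in N_{\mathcal{P}^{\pm}}(w(C))$ into the existence of scalars $\alpha_1,\dots,\alpha_k$ so that the power diagram defined by the pairs $(a_i,\alpha_i)$ has the required properties. The key observation is that $a$ lies in the normal cone precisely when $w(C)$ is optimal for the linear program $\max a^T w$ over $\mathcal{P}^{\pm}$, and that this LP coincides (under the projection from the $y_{ij}$-formulation of Section~1) with the constrained assignment LP.

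For the first statement I would apply LP duality to this assignment LP. With nonnegative multipliers $u_i,v_i$ for the upper and lower cluster-size constraints and free multipliers $\lambda_l$ for the partition constraints, and after setting $\alpha_i := v_i - u_i$, complementary slackness at the primal optimum corresponding to $C$ delivers conditions (i) and (ii) of Proposition~\ref{boundedshapevertexcharacterization}: $\abs{C_i} > s_i^-$ forces $v_i=0$, hence $\alpha_i \leq 0$, and $\abs{C_i} < s_i^+$ forces $u_i=0$, hence $\alpha_i \geq 0$. Dual feasibility together with tightness on the support of the primal optimum produces the weak form of (iii), namely $(a_j-a_i)^T x_l \leq \alpha_i - \alpha_j$ whenever $x_l \in C_i$ and $j \neq i$. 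This is precisely the inequality that places $x_l$ inside $P_i$, and so $C_i \subseteq P_i$ for every $i$. Part~(2) is then immediate from the remark preceding the corollary, taken from \cite{Barnes1992}: for $a \in \interior(N_{\mathcal{P}^{\pm}}(w(C)))$ one can choose $\alpha$ so that (iii) holds strictly, which gives $x_l \in \interior(P_i)$ for every $x_l \in C_i$.

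For part~(3) I would argue the contrapositive: if $C_i \subseteq \interior(P_i)$ for all $i$, then $a \in \interior(N_{\mathcal{P}^{\pm}}(w(C)))$. Under this hypothesis, the $\alpha_i$'s from part~(1) satisfy (i), (ii) and the strict form of (iii). For any other feasible clustering $C'$, writing
\[
a^T w(C) - a^T w(C') \;=\; \sum_{l:\, i(l) \neq i'(l)} \bigl(a_{i(l)} - a_{i'(l)}\bigr)^T x_l,
\]
where $i(l)$ and $i'(l)$ denote the cluster of $x_l$ in $C$ and $C'$, the strict form of (iii) bounds this sum strictly below by $\sum_l \bigl(\alpha_{i'(l)} - \alpha_{i(l)}\bigr) = \sum_i \alpha_i(\abs{C_i'} - \abs{C_i})$. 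A short case distinction using (i), (ii) together with the feasibility bounds $s_i^- \leq \abs{C_i'} \leq s_i^+$ shows that each term $\alpha_i(\abs{C_i'} - \abs{C_i})$ is nonnegative. Therefore $a^T w(C) > a^T w(C')$ strictly for every $C' \neq C$, so $w(C)$ is the unique maximizer of $a^T\cdot$ on $\mathcal{P}^{\pm}$, placing $a$ in the interior of the normal cone -- a contradiction to $a \in \bd(N_{\mathcal{P}^{\pm}}(w(C)))$.

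The main obstacle is the sign analysis in part~(3): one must verify, using only the asymmetric one-sided conditions (i) and (ii) together with the feasibility of $C'$, that every summand $\alpha_i(\abs{C_i'}-\abs{C_i})$ has the right sign. The LP duality in part~(1) is routine but requires careful bookkeeping so that the $\alpha_i$ extracted from the dual match the sign conventions of Proposition~\ref{boundedshapevertexcharacterization}.
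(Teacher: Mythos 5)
Your proposal is correct, but it is considerably more self-contained than what the paper actually does: the paper offers no proof of this corollary at all, deriving it in one line from Proposition~\ref{boundedshapevertexcharacterization} together with the remark (credited to \cite{Barnes1992}) that any $a$ in the interior of the normal cone admits suitable scalars $\alpha_1,\dots,\alpha_k$, and from the Aurenhammer--Hoffmann--Aronov correspondence between such separations and power diagrams. Your LP-duality derivation of part~(1) is essentially a re-proof of the relevant half of Proposition~\ref{boundedshapevertexcharacterization}: the $\alpha_i = v_i - u_i$ are exactly the dual size-constraint multipliers, complementary slackness gives (i) and (ii), and dual feasibility on the support gives the weak form of (iii), hence $C_i \subseteq P_i$. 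Your part~(3) is the most valuable addition, since the paper nowhere spells out why a boundary site vector forces a data point onto a cell boundary: the telescoping identity $a^Tw(C)-a^Tw(C') = \sum_l (a_{i(l)}-a_{i'(l)})^Tx_l$, the sign analysis showing $\alpha_i(\abs{C_i'}-\abs{C_i})\geq 0$ under (i), (ii) and feasibility of $C'$, and the conclusion that strict (iii) would make $w(C)$ the unique maximizer (hence $a$ interior) are all sound; I verified the case distinction. Two small points you should make explicit if you write this up: first, the step from ``$w(C)$ is the unique maximizer over $\mathcal{P}^{\pm}$'' to ``$a \in \interior(N_{\mathcal{P}^{\pm}}(w(C)))$'' uses the standard fact that the interior of a vertex's (full-dimensional) normal cone is exactly the set of objectives with a unique maximizer, which deserves a sentence since $\mathcal{P}^{\pm}$ is not full-dimensional; second, passing from $x_l \in \interior(P_i)$ to strictness of all defining inequalities uses that the sites $a_j$ are pairwise distinct, an implicit hypothesis of the power diagram definition that both your argument and the corollary itself quietly assume. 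Neither is a gap in substance.
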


The statement also holds for the single-shape partition polytope $\mathcal{P}^=$ by replacing ``$\pm$'' by ``$=$''. Further, the normal cone of a vertex clustering of the single-shape partition polytope encodes exactly all site vectors that allow a separating power diagram (see \cite{Borgwardt2010}). Clearly, if $a$ is in the normal cone, then so is $\lambda a$ for every $\lambda > 0$. Thus $a$ and $\lambda a$ yield the same constrained LSA. This was first proven by \cite{Aurenhammer1998}, in a different notation.

\begin{proposition}[Aurenhammer, Hoffmann, Aronov 1998] \label{scalepowerdiagram}
Let $a \in \R^{d \cdot k}$ be a site vector of a constrained LSA. For all $\lambda > 0$, the site vectors $\lambda a$ yield the same constrained LSA.
\end{proposition}

Before we turn to our main results, we would like to mention two tools that will make our arguments easier. First, recall our assumption that the zero vector is not contained in $X$. This is no restriction, since the overall structure of a data set is not changed when translating the whole set by the same vector. Second, we can interpret any movement (recall Section \ref{subsec-movements}) as a translation of the corresponding clustering vector. Let $C, C'$ such that $CDG(C,C')$ is a single path or cycle corresponding to a (cyclical) movement $M$. Then the difference of the clustering vectors $w(M) := w(C') - w(C)$ is called the \textbf{vector of the movement} $M$. Note that the vector of the inverse movement is given by $w(M^{-1}) = - w(M)$.


\section{Main results}\label{sec-results}

We begin each section with a brief overview.

\subsection{Volume of Clusterings}\label{subsec-largecones}

\noindent{\bf Overview.} 
Maximizing the linear objective vector $a=(a_1^T,\dots,a_k^T)^T \in \mathbb{R}^{d \cdot k}$ over the bounded-shape partition polytope yields a vertex clustering. The sites $a_i \in \mathbb{R}^d$ allow the construction of a separating power diagram for the clusters, c.f. \cite{Barnes1992,Borgwardt2010,Hwang1998}.
Moreover, site vectors are invariant under scaling (Proposition \ref{scalepowerdiagram}). Combining the properties listed in Section \ref{subsec-boundedshapepartitionpolytope} enables us to {\bf quantitatively measure the space of all sites} that allow the construction of a separating power diagram inducing a given clustering {\bf by the volume of its normal cone}. This gives rise to a quality measure that we call the {\bf volume of a clustering}.

\vspace*{0.2cm}

\noindent Instead of considering each site vector $a \in \R^{d \cdot k}$ individually, we consider its equivalence class $[a] := \{ \lambda a \ | \ \lambda > 0\}$ and choose the unit vector $\frac{1}{\norm{a}_2} a$ as a representative. This allows us to introduce a notion of ``distance of sites''. Let $L(\gamma)$ be the length of a curve $\gamma$ and $\Sphere^{d \cdot k} := \{ x \in \R^{d \cdot k} \ | \ \norm{x}_2 = 1\}$ be the Euclidean unit sphere.

\begin{definition}[Distance of Sites] \label{distancepowerdiagram}
Let $a, a' \in \Sphere^{d \cdot k}$ be two site vectors. The \textbf{distance} of the equivalence classes $[a]$ and $[a']$ is defined as the distance of the site vectors on the unit sphere, i.e.
\begin{equation*}
d(a,a') := \inf\{ L(\gamma) \ | \ \gamma: [0;1] \mapsto \R^{d \cdot k}, \, \gamma(0)=a, \, \gamma(1) = a', \, \gamma(t) \in \Sphere^{d \cdot k} \ \forall \, t \in [0;1] \}.
\end{equation*}
\end{definition}

Note that $d: \Sphere^{d \cdot k} \times \Sphere^{d \cdot k} \mapsto \R$ is a metric, takes values between $0$ and $\pi$, and that an infimum always exists, because $L(\gamma) \geq 0$ for all $\gamma: [0;1] \mapsto \R^{d \cdot k}$. 

If a vertex clustering $C$ of $\mathcal{P}^{\pm}$ or $\mathcal{P}^{=}$ has a large normal cone, then a randomly chosen site vector is likely to lie in its cone. By the previous observations, the chosen site vector defines a separating power diagram inducing $C$.
We are interested in measuring the volume of the normal cones of the bounded-shape and single-shape partition polytopes in order to characterize ``good'' clusterings.

We follow the notation of \cite{BonifasEisenbrand2014} who used the volume of normal cones for the studies of combinatorial diameters. 
For a cone $K \subseteq \R^{d \cdot k}$, we call $B(K) := K \cap \Sphere^{d \cdot k}$ the \textbf{base of $K$}. The \textbf{volume of $K$} is then defined as the $(d \cdot k -1)$-dimensional volume of $B(K)$ and is denoted by $\vol(K)$.
A set $A \subseteq \Sphere^{d \cdot k}$ is called \textbf{spherically convex}, if for all $x,y \in A$ the geodesic $\gamma: [0;1] \mapsto \Sphere^{d \cdot k}$ connecting $x$ and $y$ with $\gamma(0)=x$ and $\gamma(1) = y$ is contained in $A$. Recall that a geodesic is a curve on the sphere with shortest length. Note that the base of a cone is spherically convex itself. These notions allow us to formally introduce two variants of a new term which we call the \textbf{volume of a clustering}.

\begin{definition}[Volume of a Clustering]\label{qualitymeasure}
Let $C$ be a feasible clustering and let $N_{\mathcal{P}^{\pm}}(w(C))$ and $N_{\mathcal{P}^{=}}(w(C))$ be its normal cones of $\mathcal{P}^{\pm}$ and $\mathcal{P}^{=}$, respectively. We define
\begin{equation*}
\mu_{\pm}(C) := \frac{\text{vol}(N_{\mathcal{P}^{\pm}}(w(C)))}{\text{vol}(\R^{d \cdot k})}
\end{equation*}
to be the \textbf{BHR volume of $C$} (named after \cite{Barnes1992} who first studied this polytope) and
\begin{equation*}
\mu_{=}(C) := \frac{\text{vol}(N_{\mathcal{P}^{=}}(w(C)))}{\text{vol}(\R^{d \cdot k})}
\end{equation*}
to be the \textbf{LSA volume of $C$}.
\end{definition}

The volume of a clustering puts the volume of the respective normal cones in relation to the volume of the whole space, so by definition $\mu_{\pm}, \mu_{=} \in [0;1]$.
The only difference is that the BHR volume compares the clustering to all feasible clusterings and the LSA volume compares it to all clusterings of the same shape. In the following, we use the simple wording ``volume of a clustering'' to refer to both variants when they behave analogously.

Note that vol$(\R^{d \cdot k})$ equals the area of the surface of $\Sphere^{d \cdot k}$. In practice, permutations of the clusters/sites yield the same clustering and the respective normal cones have equal volume. For our purposes, it is more useful to consider each permutation as an individual clustering. However, if all cluster bounds are symmetric, then the volume defined above only takes values between 0 and $\frac{1}{k!}$, because it only takes one possible permutation into account. By Lemma \ref{union}, $\mu_{\pm} (C) \leq \mu_{=} (C)$ for all clusterings $C$.

The volume of a clustering quantifies the ``fraction'' of all site vectors of separating power diagrams that induce the given clustering. This is quite a different concept than the two widely used quality measures for clusterings: the least-squares functional, which measures the quadratic Euclidean distance of the data points to their respective site, and the so-called {\em margin}, which measures the smallest Euclidean distance of data points to the nearest separating hyperplane (see \cite{Borgwardt2015}). Let us exhibit the difference between the LSA volume and the classical measures. (The BHR volume behaves analogously.) 

Consider Figure \ref{fig:volumebetterthanlsa}, which depicts two similar data sets, where three ``point clouds'' of the same structure are at different distance to each other. The least-squares values of both clusterings are equal, whereas the right-hand clustering has a larger volume. This can be seen by noting that all power diagrams that induce the left-hand clustering also induce the right-hand one. Conversely, there are many power diagrams that induce the right-hand clustering, but not the left-hand one. This means that the set of site vectors that give the left-hand clustering is a strict subset of the set of site vectors that give the right-hand one.  Informally, unlike a least-squares functional, the volume is able to take the distance between different clusters into account.

\begin{figure}
\centering
\includegraphics[scale=0.38]{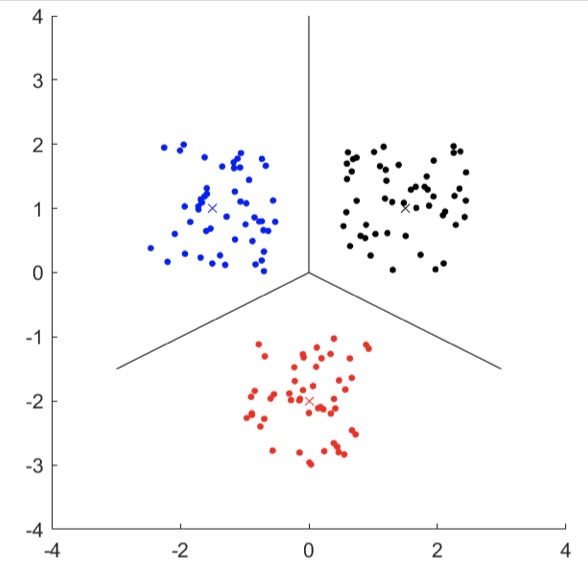}
\hspace*{0.3cm}
\includegraphics[scale=0.38]{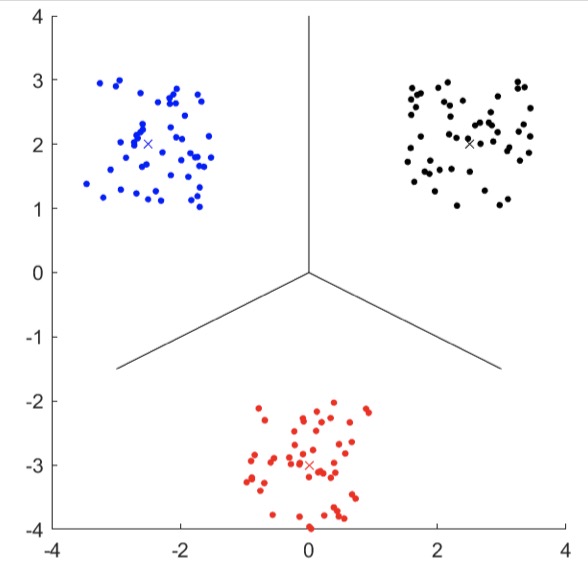}
\caption{Two clusterings with the same least-squares value, but different volume.}
\label{fig:volumebetterthanlsa}
\end{figure}

Now, consider Figure \ref{fig:volumebetterthanmargin}. The only difference between the data sets lies in the vertical scaling of the points in the top clusters and the horizontal scaling of the points in the bottom cluster. Both clusterings have equal margin, but the one on the right has a lower volume. It is easy to see that the separating hyperplanes on the right can be perturbed less due to the long, drawn-out clusters. This implies that the site vectors that define a separating power diagram inducing the right-hand clustering is a strict subset of the site vectors for the left-hand clustering. While the margin informally measures the ability to shift the separating hyperplanes closer to the clusters (without changing their directions), the volume measures the ability to perturb the normals of the separating hyperplanes.


\begin{figure}
\centering
\includegraphics[scale=0.38]{VolumeClustering.png}
\hspace*{0.3cm}
\includegraphics[scale=0.38]{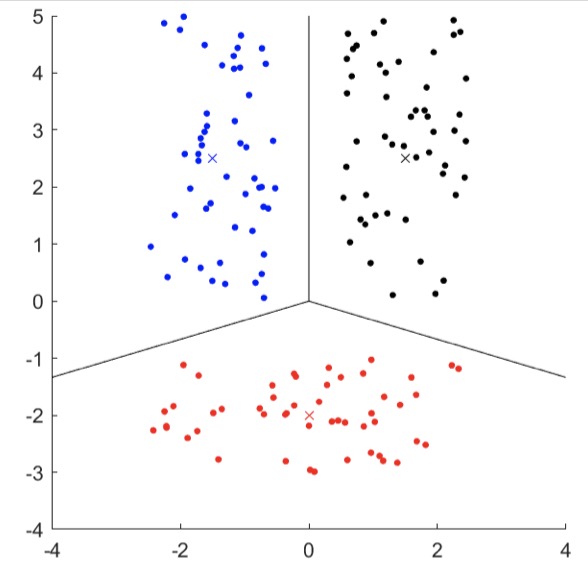}
\caption{Two clusterings with the same margin, but different volume.}
\label{fig:volumebetterthanmargin}
\end{figure}

Note that it does not matter whether we consider $\R^{d \cdot k}$ or the affine hull of the polytope in Definition \ref{qualitymeasure}, because all normal cones have a lineality space $\lin(\mathcal{P}^{\star} - v)^{\perp}$ with $v \in \mathcal{P}^{\star}$ for $\star \in \{ \pm, =\}$, respectively. Since we consider the volume of a cone relative to the whole space, we can restrict to the affine hull of the polytopes (see \cite{Happach2016} for further details).

In fact, one can show that $\mathcal{P}^{\pm}$ and $\mathcal{P}^{=}$ are contained in a $(d \cdot (k-1))$-dimensional affine subspace. Let $a=(\bar{a}^T,\dots,\bar{a}^T)^T \in \R^{d \cdot k}$ with $\bar{a} \in \R^d$ and consider an arbitrary clustering vector $w(C)$. Then
\begin{equation*}
a^T w(C) = \sum_{i =1}^k \bar{a}^T \sigma_i = \sum_{i=1}^k \sum_{x \in C_i} \bar{a}^T x = \sum_{j=1}^n \bar{a}^T x_j,
\end{equation*}
so $\mathcal{P}^{\star} \subseteq \{ x \in \R^{d \cdot k} \ | \ (\bar{a}^T,\dots,\bar{a}^T) x = \sum\limits_{j=1}^n \bar{a}^T x_j \}$ for $\star \in \{ \pm, =\}$. Thus the dimension is at most $d \cdot k - d = d \cdot (k-1)$.


We conclude the section by showing that the volume of a clustering is well-defined, and by providing an explicit algebraic description.

\begin{theorem}\label{measure}
The measures $\mu_{\pm}$ and $\mu_{=}$ are well-defined. For $\star \in \{ \pm , = \}$, $C$ is a vertex clustering of $\mathcal{P}^{\star}$ if and only if $\mu_{\star}(C) > 0$ and then
\begin{equation}\label{integralmeasure}
\mu_{\star}(C) = \frac{\Gamma(\frac{d \cdot k}{2})}{2\pi^{\frac{d \cdot k}{2}}} \int\limits_{\bd(B(N_{\mathcal{P}^{\star}}(w(C))))} d(z,a) \ d a ,
\end{equation}
with $z \in \interior(N_{\mathcal{P}^{\star}}(w(C))) \cap \Sphere^{d \cdot k}$ and $\Gamma$ being the Gamma function.
\end{theorem}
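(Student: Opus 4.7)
The plan is to split the statement into three sub-claims and treat them independently: (a) well-definedness of $\mu_\star$; (b) the equivalence ``$C$ is a vertex clustering iff $\mu_\star(C) > 0$''; and (c) the integral formula.

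For (a), the key observation is that $N_{\mathcal{P}^\star}(w(C))$ is a closed convex polyhedral cone, being the normal cone of a polytope at one of its points. Its base $B(N_{\mathcal{P}^\star}(w(C)))$ is therefore a spherically convex Borel subset of $\Sphere^{d \cdot k}$ of finite $(d \cdot k - 1)$-dimensional measure, so $\vol(N_{\mathcal{P}^\star}(w(C)))$ is well-defined. Since $\R^{d \cdot k}$ is itself a cone containing $N_{\mathcal{P}^\star}(w(C))$, the ratio $\mu_\star(C)$ lies in $[0,1]$.

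For (b), I would use the standard face/normal-cone duality: if $v$ lies in the relative interior of a face $F$ of a polytope $P \subseteq \R^n$, then $\dim N_P(v) = n - \dim F$, so the normal cone is full-dimensional (of dimension $d \cdot k$) if and only if $v$ is a vertex. Combined with the elementary fact that a cone's base on $\Sphere^{d \cdot k}$ has positive $(d \cdot k - 1)$-dimensional spherical measure if and only if the cone is full-dimensional (otherwise the base sits inside a lower-dimensional great subsphere and has measure zero), this yields $\mu_\star(C) > 0$ iff $w(C)$ is a vertex iff $C$ is a vertex clustering of $\mathcal{P}^\star$.

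For (c), first recognize that $2\pi^{d \cdot k/2}/\Gamma(d \cdot k/2)$ is the standard surface area of $\Sphere^{d \cdot k - 1}$, so the prefactor equals $1/\vol(\R^{d \cdot k})$ and the claim reduces to
\begin{equation*}
\vol(B(N_{\mathcal{P}^\star}(w(C)))) = \int_{\bd(B(N_{\mathcal{P}^\star}(w(C))))} d(z,a)\, da
\end{equation*}
for any $z \in \interior(N_{\mathcal{P}^\star}(w(C))) \cap \Sphere^{d \cdot k}$. My approach is to exploit that $B(N_{\mathcal{P}^\star}(w(C)))$ is spherically convex and hence star-shaped about $z$: every geodesic from $z$ meets $\bd B(N_{\mathcal{P}^\star}(w(C)))$ in a unique point $a(\theta)$ at distance $R(\theta) = d(z, a(\theta))$. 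Introducing geodesic polar coordinates on $\Sphere^{d \cdot k - 1}$ centered at $z$, compute $\vol(B(N))$ as a radial integral over directions and then rewrite it as a surface integral over $\bd B(N)$ via the push-forward $\theta \mapsto a(\theta)$, matching the resulting integrand to $d(z,a)$ by using the polyhedral structure of $N_{\mathcal{P}^\star}(w(C))$.

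The main obstacle will be exactly this final identification. The spherical volume element carries $\sin^{d \cdot k - 2}(r)$ factors along the radial direction, and one has to verify that, combined with the Jacobian of $\theta \mapsto a(\theta)$, the resulting density against the intrinsic $(d \cdot k - 2)$-dimensional surface measure on $\bd B(N)$ simplifies exactly to $d(z,a)$. This will rely crucially on the fact that $\bd(B(N_{\mathcal{P}^\star}(w(C))))$ decomposes into pieces of great subspheres (since $N_{\mathcal{P}^\star}(w(C))$ is polyhedral), so that the outward radial geodesics admit a uniform description facet by facet. Parts (a) and (b) are essentially routine once the polyhedral normal-cone structure has been put in place; the effort goes almost entirely into part (c).
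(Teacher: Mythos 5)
Your parts (a) and (b) are correct and, if anything, tighter than the paper's own treatment: the paper disposes of non-vertices by noting that the normal cone then has empty interior, and argues well-definedness via a diffeomorphism between bases of cones of equal volume, whereas your face/normal-cone dimension count ($\dim N_P(v) = n - \dim F$) combined with the observation that a non-full-dimensional cone has a base of spherical measure zero is the standard and cleaner route. The genuine gap is in part (c), which you flag but do not close. In geodesic polar coordinates about $z$ the spherical volume of the base is $\int \int_0^{R(\theta)} \sin^{d\cdot k-2}(r)\,dr\,d\theta$, and the theorem asserts this equals $\int_{\bd(B(N_{\mathcal{P}^{\star}}(w(C))))} d(z,a)\,da$ with $da$ the surface measure on the boundary. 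The ``final identification'' you defer --- that the radial integral, combined with the Jacobian of $\theta \mapsto a(\theta)$, produces exactly the density $d(z,a)$ --- is the entire content of the identity, and it is not a routine verification: the radial integral is $\int_0^{R}\sin^{d\cdot k-2}(r)\,dr$, not $R$, and the Jacobian of the radial projection onto the boundary does not repair this in general. A direct check on the positive orthant of $\R^3$ (base a spherical triangle of area $\pi/2$, with $z$ the normalized all-ones vector) gives $\int_{\bd} d(z,a)\,da \approx 3.43 \neq \pi/2$, so the identification you hope for does not simply fall out of the computation as stated.

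For what it is worth, the paper's own proof treats (c) at essentially the same level as your outline: it uses spherical convexity to argue that each geodesic from $z$ meets the boundary of the base exactly once, and then asserts that the area ``can be computed by the integral over all directions of $d(z,a)$'' without performing the change of variables or confronting the $\sin^{d\cdot k-2}(r)$ factor. So your proposal follows the same route as the paper, but neither your outline nor the paper's argument supplies the step on which the formula actually hinges; to make part (c) rigorous you would have to carry out the polar-coordinate computation explicitly and either produce the claimed density or adjust the stated formula accordingly.
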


Note that the coefficient in (\ref{integralmeasure}) is the inverse of the area of the surface of $\Sphere^{d \cdot k}$. We postpone the proof of this theorem to Section \ref{sec-proofofmeasure}.

\subsection{Site Vectors in the Normal Cone}\label{subsec-edgesofpolytope}

\noindent{\bf Overview.} We derive an explicit description of the normal cones of the bounded-shape partition polytope via a characterization of the edges of the polytope. As a corollary, we also obtain a description for the normal cones of the single-shape partition polytope. Informally, two clusterings correspond to neighboring vertices of the polytope if they differ by only a {\bf single movement or cyclical movement}. The explicit description of the normal cone allows a representation of the set of sites that define a separating power diagram for a vertex clustering in the form of $k$ convex areas in the space of the underlying data set.

\vspace*{0.2cm}

\noindent It is well-known that the edges incident to a vertex of a polytope are normal vectors to the facets of the normal cone of the vertex. Therefore, in order to obtain an explicit representation of the normal cone, we characterize the edges of the bounded-shape partition polytope. Our characterization generalizes previous results for the special case $d = 1$, c.f. \cite{Gao1999}.

The closest result in the literature is by \cite{Fukuda2003}, who characterized the neighborhood of a vertex $w(C)$ of the all-shape partition polytope $\mathcal{P}$. They showed that edges incident to $w(C)$ correspond to movements of the form $C_i \stackrel{x}\longrightarrow C_j$ or $C_i \stackrel{x}\longrightarrow C_j \stackrel{\nu x}\longrightarrow C_i$ with $\nu < 0$ and $i,j \in [k]$, $i \not= j$. 
Moreover, they proved that, if $\lin(\{x\}) \cap X = \{x\}$ for all $x \in X$ then all edges of $\mathcal{P}$ correspond to movements that move a single element from one cluster to another. The following theorem extends their result to general lower and upper bounds $s^{-}$ and $s^{+}$.

\begin{theorem}\label{boundedshapepartitionedges}
Let $C := (C_1,\dots,C_k)$, $C' := (C'_1,\dots,C'_k)$ be two clusterings such that $w(C)$ and $w(C')$ are adjacent vertices of $\mathcal{P}^{\pm}$. Suppose that no three points in $X$ lie on a single line. Then $C$ and $C'$ differ by a single (cyclical) movement or by two movements and there are distinct $i,j \in [k]$ such that both are of the form $C_i \rightarrow C_j$.
\end{theorem}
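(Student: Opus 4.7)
The plan is to proceed by contradiction via a splitting argument standard for partition polytopes. Assume $w(C)$ and $w(C')$ are adjacent, so that $[w(C), w(C')]$ is a $1$-face of $\mathcal{P}^{\pm}$. First I decompose $CDG(C,C')$ into its edge-disjoint paths and cycles $M_1,\dots,M_r$ using the observation recalled in Section~\ref{subsec-movements}; these correspond to movements whose vectors satisfy $w(C')-w(C)=\sum_{j=1}^r w(M_j)$. For any proper bipartition $S \sqcup S^c = [r]$ with both parts nonempty, applying only the movements indexed by $S$ (respectively $S^c$) to $C$ yields candidate clusterings $C_S, C_{S^c}$ with
\begin{equation*}
w(C_S)+w(C_{S^c}) = w(C)+w(C').
\end{equation*}
The crucial observation is that whenever both $C_S, C_{S^c}$ are feasible and the two points $w(C_S),w(C_{S^c})$ do not both lie on the line through $w(C),w(C')$, the midpoint of $[w(C),w(C')]$ coincides with the midpoint of $[w(C_S),w(C_{S^c})]\subseteq\mathcal{P}^{\pm}$, which lies in a distinct direction. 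Then the smallest face of $\mathcal{P}^{\pm}$ containing the midpoint has dimension at least two, contradicting adjacency.

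The proof then becomes a case analysis on $r$ and on the types of components, showing that outside the configurations listed in the theorem such a bipartition always exists. For the feasibility part I would use that cyclical components leave all cluster sizes unchanged, so they can be placed freely into either side of any bipartition, while for a path component $M_j$ the singleton split $S=\{j\}$ alters only the sizes of its endpoints $\alpha_j,\beta_j$ by $\pm 1$; combined with the feasibility of $|C|$ and $|C'|$ this keeps $|C_{\{j\}}|$ between them coordinatewise and hence within $[s^-,s^+]$.

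The main obstacle is the off-the-line condition. I need to show that, except in the exceptional configuration, some split produces $w(M_S):=\sum_{j\in S}w(M_j)$ that is not a scalar multiple of $\sum_j w(M_j)$. This uses the block-sparsity structure of movement vectors: a path or cycle $M_j$ places entries of the form $x_{j_{l-1}}-x_{j_l}$ in precisely the cluster blocks that it touches. If two components have distinct supports in these blocks, their singleton-split vectors cannot be proportional and one obtains the desired off-the-line split immediately. When two components share identical block support (such as two 2-cycles in the same pair of clusters, or two parallel length-one paths), proportionality of the corresponding differences of data points in several coordinate blocks forces linear relations; in every case except two length-one paths $C_i\xrightarrow{x}C_j$ and $C_i\xrightarrow{y}C_j$ with $y$ a positive multiple of $x$, these relations produce three distinct points of $X$ on a common line, violating the hypothesis.

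The hard part is this collinearity-exclusion bookkeeping: one must carefully pair and walk through the small cases (cycle plus cycle, cycle plus path, two paths with matching or mismatching endpoints, paths of length $\geq 2$, $r\geq 3$ in general) and in each case pinpoint the three data points forced onto one line by the hypothesized parallelism of movement vectors. Once this is done, the only surviving configuration in which every singleton split lands on the line through $w(C),w(C')$ is exactly the one described in the theorem statement: a single (cyclical) movement, or two movements both of the form $C_i\to C_j$ for the same distinct $i,j\in[k]$, where the two moved data points happen to be positive scalar multiples of one another. This matches the claim.
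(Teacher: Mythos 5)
Your overall frame is sound and close in spirit to the paper's: the midpoint/splitting argument you use to force $w(M_S)$ onto the line through $w(C)$ and $w(C')$ is just the face-theoretic dual of the paper's Lemma \ref{vectorscollinear} (which obtains $a^Tw(M)=0$ for a functional $a$ exposing the edge), and your block-support observation is the paper's Lemma \ref{operateonsameclusters}. The feasibility discussion for singleton path splits is also essentially right, provided you fix a decomposition in which every path runs from a surplus node ($|C_i|>|C'_i|$) to a deficit node, a point worth making explicit.

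However, there is a genuine gap at the step you label ``the hard part,'' and the claim you make there is false as stated. Bipartitions of a \emph{fixed} decomposition $M_1,\dots,M_r$ only yield that the vectors $w(M_j)$ are pairwise proportional, and when two components share the same block support this gives relations of the form $x_{j_{l-1}}-x_{j_l}=\lambda\,(x'_{j_{l-1}}-x'_{j_l})$ in each block, i.e.\ the two tuples of moved points form homothetic configurations. That does \emph{not} force three points of $X$ onto a line. Concretely, take two $2$-cycles $C_1\xrightarrow{x}C_2\xrightarrow{y}C_1$ and $C_1\xrightarrow{u}C_2\xrightarrow{v}C_1$ with $x=(0,0)$, $y=(1,0)$, $u=(0,1)$, $v=(1,1)$: then $w(M_1)=(y-x,\,x-y)$ and $w(M_2)=(v-u,\,u-v)$ are proportional (with $\lambda=1$), yet no three of the four data points are collinear, so your argument cannot derive a contradiction from this configuration. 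The missing idea is the paper's crossover construction (Lemma \ref{atmostonecyclicalexchange}): one must pass to an \emph{alternative} decomposition of $CDG(C,C')$, e.g.\ $M'_1\colon C_1\xrightarrow{x}C_2\xrightarrow{v}C_1$ and $M'_2\colon C_1\xrightarrow{u}C_2\xrightarrow{y}C_1$, which also sum to $w(C')-w(C)$ and whose individual application is feasible, so that $w(M'_1)$ must also lie on the edge line. Comparing $w(M_1)$ and $w(M'_1)$ in a single block then gives two proportional difference vectors with a \emph{common} endpoint ($y-x$ and $v-x$), and only this forces three points of $X$ onto one line. Your splitting framework never generates these extra collinearity relations because it never re-decomposes the multigraph; until it does, the case analysis cannot close, and the same issue recurs for every pair of components with identical support (two cycles, cycle plus path, two long paths).
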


The different cases of Theorem \ref{boundedshapepartitionedges} are depicted in Figure \ref{fig:boundedshapepartitionedges}. Note that the case of two movements can only occur if all sites lie on a line. For the single-shape partition polytope, we obtain the following corollary.

\begin{corollary}\label{singleshapepartitionedges}
Let $C := (C_1,\dots,C_k)$, $C' := (C'_1,\dots,C'_k)$ be two clusterings such that $w(C)$ and $w(C')$ are adjacent vertices of $\mathcal{P}^{=}$. If no four points in $X$ lie on a single line, then $C$ and $C'$ only differ by a single cyclical movement.
\end{corollary}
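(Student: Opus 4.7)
The plan is to specialize Theorem \ref{boundedshapepartitionedges} to the case $s^- = s^+ = s$, where $\mathcal{P}^{=}_s = \mathcal{P}^{\pm}(X,k,s,s)$. Since every clustering in $\mathcal{P}^{=}$ has shape exactly $s$, the clusterings $C$ and $C'$ have identical shapes, so by the discussion of the CDG in Section~\ref{subsec-movements}, $CDG(C,C')$ decomposes into vertex-disjoint directed cycles, each corresponding to a cyclical movement $M_1,\dots,M_m$. Of the three edge types listed in Theorem \ref{boundedshapepartitionedges}, the single movement $C_i \rightarrow C_j$ and the pair of movements both of form $C_i \rightarrow C_j$ each alter cluster sizes (by $\pm 1$ and $\pm 2$, respectively), contradicting $|C|=|C'|=s$. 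Hence only the cyclical-movement case survives, and the task reduces to showing $m=1$.

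For this, I would use the standard swap argument. Assume for contradiction that $m\geq 2$. Since each $x_j \in X$ appears as the label of at most one edge in $CDG(C,C')$, the cycles involve pairwise disjoint sets of data points. Define $\tilde{C}$ by applying $M_1$ alone to $C$, and $\tilde{C}'$ by applying $M_2,\dots,M_m$ to $C$. Both are well-defined and feasible clusterings in $\mathcal{P}^{=}$, since cyclical movements preserve shape. A direct computation gives
\begin{equation*}
w(\tilde{C}) + w(\tilde{C}') = \bigl(w(C)+w(M_1)\bigr) + \Bigl(w(C)+\sum_{l=2}^{m} w(M_l)\Bigr) = 2w(C)+\sum_{l=1}^m w(M_l) = w(C)+w(C').
\end{equation*}
This expresses the midpoint of $[w(C),w(C')]$ as a convex combination of the two other feasible clustering vectors $w(\tilde{C})$ and $w(\tilde{C}')$, which contradicts $[w(C),w(C')]$ being an edge of $\mathcal{P}^{=}$, provided $\{w(\tilde{C}),w(\tilde{C}')\} \neq \{w(C),w(C')\}$.

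The main obstacle, and the place where the hypothesis enters, is ruling out this non-degeneracy. A single cyclical movement $M$ always satisfies $w(M)\neq 0$ because the data points are distinct, so $\tilde{C}\neq C$ and $\tilde{C}'\neq C$ automatically. The remaining danger is $w(\tilde{C})=w(C')$, equivalently $\sum_{l\geq 2} w(M_l)=0$; equivalently, a proper subsum of the movement vectors vanishes. Such a vanishing subsum translates into a linear dependence among differences $x_{j_{l-1}}-x_{j_l}$ of data points coming from disjoint cycles, which forces the involved points to satisfy an arithmetic relation of the type $x_a+x_c=x_b+x_d$ on a common line. Ruling this out is precisely what the assumption that no four points of $X$ lie on a single line buys us: it ensures that no proper subsum of movement vectors from disjoint cycles can cancel. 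With this non-degeneracy secured, the swap argument goes through and forces $m=1$, so $C$ and $C'$ differ by a single cyclical movement. \qed
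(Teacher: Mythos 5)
There is a genuine gap, and it sits exactly where you place the weight of the argument. Your midpoint identity $w(\tilde{C})+w(\tilde{C}')=w(C)+w(C')$ only shows that $w(\tilde{C})$ and $w(\tilde{C}')$ must lie on the face $[w(C),w(C')]$; it does \emph{not} contradict that segment being an edge. A clustering vector is allowed to sit in the relative interior of an edge of $\mathcal{P}^{=}$ (the polytope is a convex hull of clustering vectors, not all of which are vertices), and in fact this is precisely what happens here: the argument behind Lemma \ref{vectorscollinear} shows that applying any single cyclical movement $M_l$ to $C$ produces a feasible clustering whose vector also maximizes the objective defining the edge, hence lies on $[w(C),w(C')]$. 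So the configuration you call a contradiction is the generic situation when $m\geq 2$, and the case you flag as the ``remaining danger'' ($w(\tilde{C})=w(C')$, i.e.\ a vanishing subsum) is not the real obstruction -- the real obstruction is $w(\tilde{C})$ strictly between $w(C)$ and $w(C')$, which your argument never excludes.

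The second problem is how you try to invoke the hypothesis. A relation of the type $x_a+x_c=x_b+x_d$ does not force the four points onto a line (the four vertices of a parallelogram satisfy it), so ``no four points of $X$ are collinear'' cannot rule out the degeneracy you describe. The paper's proof uses the collinearity of the movement vectors (all $w(M_l)$ are positive multiples of $w(C')-w(C)$, via Lemmas \ref{vectorscollinear} and \ref{operateonsameclusters}) and then a different swap: it exchanges the final arcs of two cycles $M_1,M_2$ passing through the same cluster sequence to build new cyclical movements $M_1',M_2'$ realizing the same transition. Comparing the $\sigma_{i_1}$-components of $w(M_1),w(M_2),w(M_1'),w(M_2')$ yields the four pairwise parallel difference vectors $x_{j_t}-x_{j_1}$, $x'_{j_t}-x'_{j_1}$, $x'_{j_t}-x_{j_1}$, $x_{j_t}-x'_{j_1}$; it is the presence of the two \emph{cross} differences that pins all four points $x_{j_1},x'_{j_1},x_{j_t},x'_{j_t}$ to a single line and produces the contradiction. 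Your proposal contains neither the collinearity input nor this cross-cycle swap, so the hypothesis is never genuinely used.
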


\begin{figure}[h]
\centering
\begin{subfigure}{14cm}
\centering
\begin{tikzpicture}[->,>=stealth',shorten >=1pt,auto,node distance=1.5cm,semithick]
	\node[circle, draw] (1) {$i_1$};
	\node[circle, draw] (2) [right of=1] {$i_2$};
	\node (3) [right of=2] {$\cdots$};
	\node[circle, draw] (4) [right of=3] {$i_t$};
	
	\path 	(1) edge (2)
			(2) edge (3)
			(3) edge (4)
			(4) edge [bend left=20] (1);
\end{tikzpicture}
\caption{\scriptsize The clustering difference graph $CDG(C,C')$ of adjacent $C,C'$ with $\abs{C} = \abs{C'}$.}
\end{subfigure}

\vspace*{0.5cm}

\begin{subfigure}{14cm}
\centering
\begin{tikzpicture}[->,>=stealth',shorten >=1pt,auto,node distance=1.5cm,semithick]
	\node[circle, draw] (1) {$i_1$};
	\node[circle, draw] (2) [right of=1] {$i_2$};
	\node (3) [right of=2] {$\cdots$};
	\node[circle, draw] (4) [right of=3] {$i_t$};
	
	\path 	(1) edge (2)
			(2) edge (3)
			(3) edge (4);
\end{tikzpicture}
\caption{\scriptsize The clustering difference graph $CDG(C,C')$ of adjacent $C,C'$ with $\abs{C} \not= \abs{C'}$.}
\end{subfigure}

\vspace*{0.5cm}

\begin{subfigure}{14cm}
\centering
\begin{tikzpicture}[->,>=stealth',shorten >=1pt,auto,node distance=1.5cm,semithick]
	\node[circle, draw] (1) {$i_1$};
	\node[circle, draw] (2) [right of=1] {$i_2$};
	
	\path 	(1) edge[bend left] (2);
	\path	(1) edge[bend right] (2);

\end{tikzpicture}
\caption{\scriptsize The clustering difference graph $CDG(C,C')$ with two movements.}
\end{subfigure}
\vspace*{0.5cm}
\caption{The possible cases of Theorem \ref{boundedshapepartitionedges} with $i_1,\dots,i_t \in [k]$.}
\label{fig:boundedshapepartitionedges}
\end{figure}
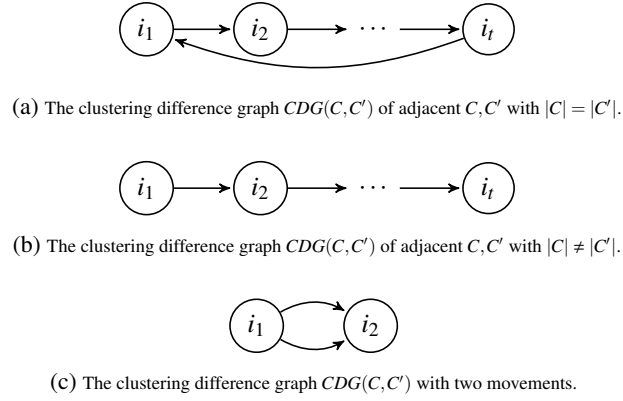

We dedicate Section \ref{sec-proofofedges} to the proof of this theorem and corollary. Figure \ref{fig:edgeofgravitypolytope} shows two vertex clusterings of $\mathcal{P}^{=}$ which are connected by an edge. The site vector of the plotted power diagram lies on the boundary of the normal cones of both vertices. As one can see, the data points on the boundary of the cells, which exist due to Corollary \ref{boundedshapepartitionpowerdiagram}, move to the other cell.

\begin{figure}[h]
\begin{center}
\scalebox{0.75}{
\begin{tikzpicture}
	\coordinate (A1) at (1,0);%
    \coordinate (A2) at (1,-1);%
    \coordinate (A3) at (2,0);%
    \coordinate (A4) at (0,-1);%
    \coordinate (B1) at (-2,-0);%
    \coordinate (B2) at (-2,-2);%
    \coordinate (B3) at (-3,-1);%
    \coordinate (B4) at (-3,1);%
    \coordinate (C1) at (-1,2);%
    \coordinate (C2) at (-1,3);%
    \coordinate (C3) at (0,3);%
    \coordinate (C4) at (1,2);%
    
     \fill[black!10] (A1) -- (A3) -- (A2) -- (A4) -- cycle;
     \fill[blue!10] (B1) -- (B2) -- (B3) -- (B4) -- cycle;
     \fill[red!10] (C1) -- (C2) -- (C3) -- (C4) -- cycle;
    
    \draw (0,0) -- (1.5,3);
    \draw (0,0) -- (0,-3);
    \draw (0,0) -- (-3.5,1.1666666667);
    
     \fill (A1) circle (0.1);
     \fill (A2) circle (0.1);
     \fill (A3) circle (0.1);
     \fill (A4) circle (0.1);
     \fill[blue] (B1) circle (0.1);
     \fill[blue] (B2) circle (0.1);
     \fill[blue] (B3) circle (0.1);
     \fill[blue] (B4) circle (0.1);
     \fill[red] (C1) circle (0.1);
     \fill[red] (C2) circle (0.1);
     \fill[red] (C3) circle (0.1);
     \fill[red] (C4) circle (0.1);
\end{tikzpicture}
}
\hspace*{1cm}
\scalebox{0.75}{
\begin{tikzpicture}
	\coordinate (A1) at (1,0);%
    \coordinate (A2) at (1,-1);%
    \coordinate (A3) at (2,0);%
    \coordinate (A4) at (0,-1);%
    \coordinate (B1) at (-2,0);%
    \coordinate (B2) at (-2,-2);%
    \coordinate (B3) at (-3,-1);%
    \coordinate (B4) at (-3,1);%
    \coordinate (C1) at (-1,2);%
    \coordinate (C2) at (-1,3);%
    \coordinate (C3) at (0,3);%
    \coordinate (C4) at (1,2);%
    
    \fill[black!10] (A1) -- (A2) -- (A3) -- (C4) -- cycle;
	\fill[blue!10] (B1) -- (A4) -- (B2) -- (B3) -- cycle;
	\fill[red!10] (C1) -- (C3) -- (C2) -- (B4) -- cycle;
	
    \draw (0,0) -- (1.5,3);
    \draw (0,0) -- (0,-3);
    \draw (0,0) -- (-3.5,1.1666666667);
    
     \fill (A1) circle (0.1);
     \fill (A2) circle (0.1);
     \fill (A3) circle (0.1);
     \fill[blue] (A4) circle (0.1);
     \fill[blue] (B1) circle (0.1);
     \fill[blue] (B2) circle (0.1);
     \fill[blue] (B3) circle (0.1);
     \fill[red] (B4) circle (0.1);
     \fill[red] (C1) circle (0.1);
     \fill[red] (C2) circle (0.1);
     \fill[red] (C3) circle (0.1);
     \fill (C4) circle (0.1);
\end{tikzpicture}
}
\caption{Two clusterings whose clustering vectors are adjacent vertices.}
\label{fig:edgeofgravitypolytope}
\end{center}
\end{figure}
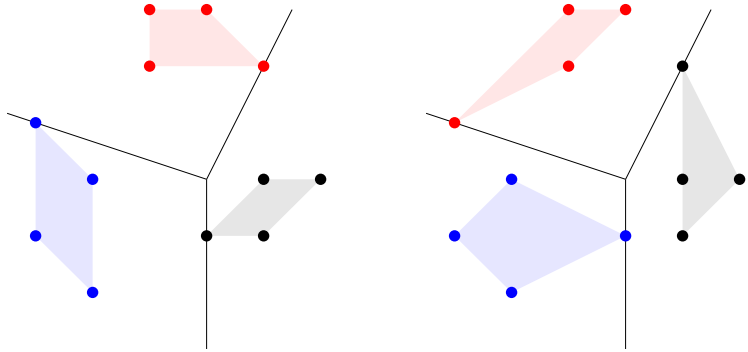

\subsection*{Proof-of-Concept Computations}

Figure \ref{fig:twopartitions} illustrates two clusterings of 27 data points in $\R^2$. These clusterings were computed by running the $k$-means algorithm 50 times with three random sites in the beginning. In every iteration the $k$-means algorithm computes a LSA to the current sites and updates each site as the arithmetic mean of the points in the cluster. This is repeated until the clustering does not change anymore. Note that the $k$-means algorithm is deterministic, but its result depends on the choice of the initial sites. Whereas the clustering on the left-hand side (except for permutation of the colors) was the output in 25 of the 50 runs, the one on the right-hand side was returned only once. Intuitively, one sees that the left-hand clustering captures the structure of the data better than the one on the right-hand side. 

\begin{figure}
\centering
\includegraphics[scale=0.38]{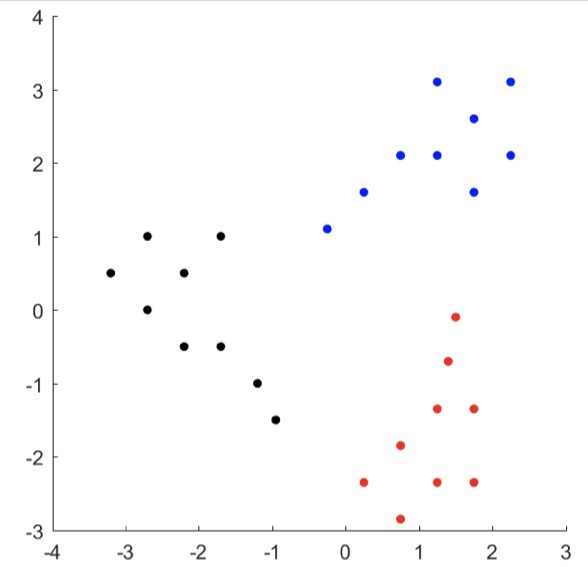}
\hspace*{0.3cm}
\includegraphics[scale=0.38]{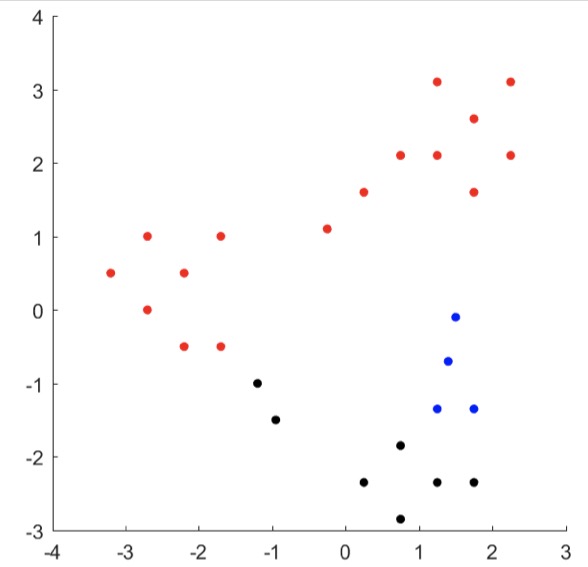}
\caption{Two vertex clusterings of $\mathcal{P}$ of a data set in $\R^2$.}
\label{fig:twopartitions}
\end{figure}

This fits with our quantitative measure: Both volumes of the clustering on the left-hand side are higher ($\mu_{\pm} \approx 0.0076$, $\mu_{=} \approx 0.041$) than the ones of the clustering on the right-hand side ($\mu_{\pm} < 0.0001$, $\mu_{=} < 0.01$).
The volumes of the respective normal cones were computed with MATLAB using the function \emph{Volume Computation of Convex Bodies} of \cite{matlab-volume} with an error tolerance of 0.001. The different volumes can also be verified by computing the edges of a respective normal cone and projecting the edges (which are (normalized) site vectors themselves) to their $d$-dimensional components for each of the $k$ sites. Each site of a site vector in the normal cone is located in the convex hull of the corresponding sites of the edges of the normal cone.


Figures \ref{fig:sitesareas} and \ref{fig:sitesareassingle} illustrate the three areas of the $2$-dimensional sites for the respective clusterings w.r.t. $\mathcal{P}$ and $\mathcal{P}^{=}$. Because of the invariance of power diagrams under scaling of sites (Proposition \ref{scalepowerdiagram}), we have to choose a fixed scaling of the $6$-dimensional site vectors to obtain a meaningful visualization. Here the site vectors are scaled to Euclidean norm 4.

\begin{figure}
\centering
\includegraphics[scale=0.38]{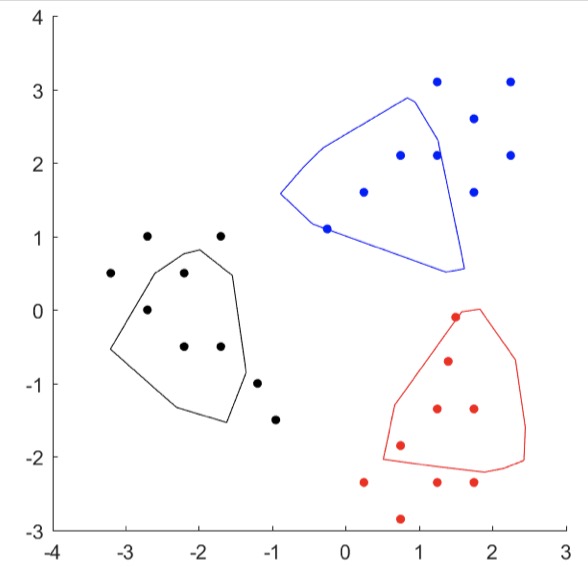}
\hspace*{0.3cm}
\includegraphics[scale=0.38]{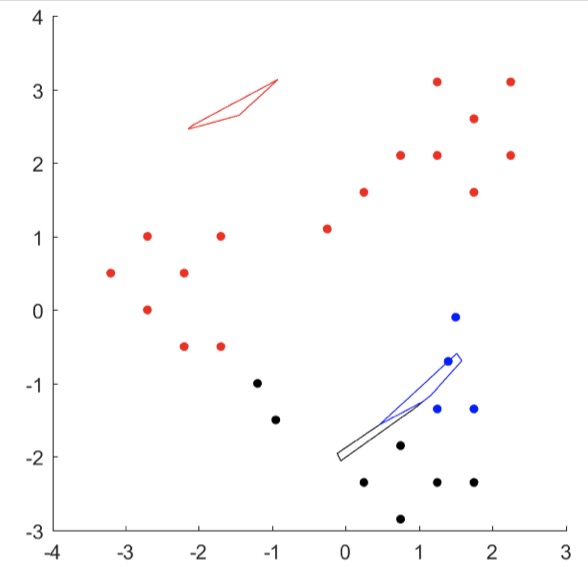}
\caption{Areas of the sites for site vectors in the vertex clustering's normal cone w.r.t. $\mathcal{P}$.}
\label{fig:sitesareas}
\end{figure}

\begin{figure}
\centering
\includegraphics[scale=0.38]{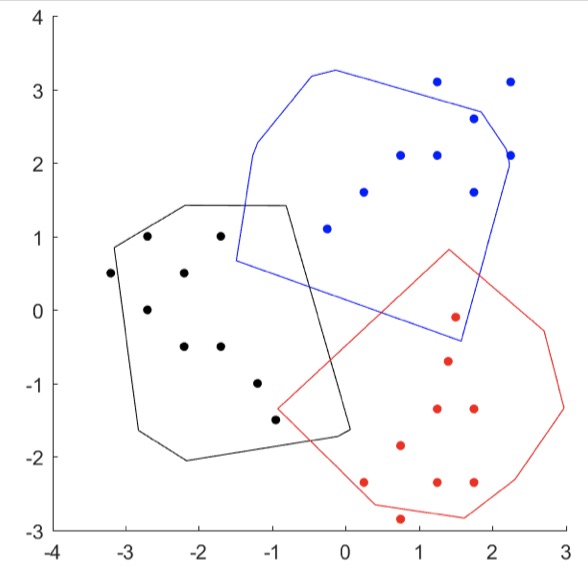}
\hspace*{0.3cm}
\includegraphics[scale=0.38]{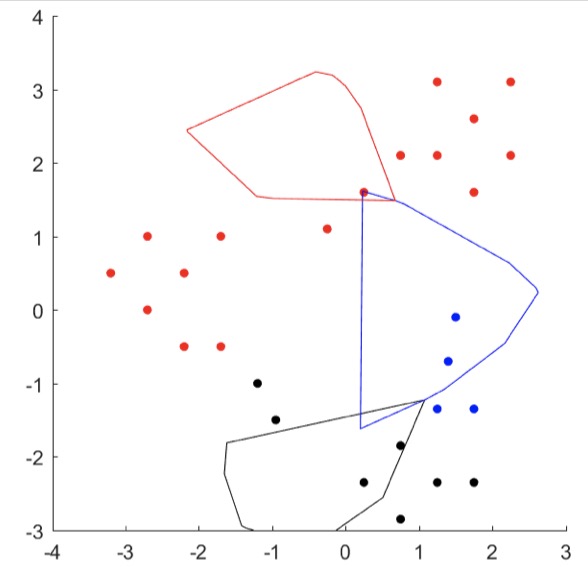}
\caption{Areas of the sites for site vectors in the vertex clustering's normal cone w.r.t. $\mathcal{P}^{=}$.}
\label{fig:sitesareassingle}
\end{figure}

One sees that the areas on the left-hand side are larger than the ones on the right-hand side, especially for the all-shape partition polytope (Figure \ref{fig:sitesareas}). Note that these areas do not mean that one can choose three sites arbitrarily within the three areas and obtain an optimal site vector for the respective clustering. Instead, for any arbitrary site within one of these areas, there are sites in the respective other areas such that the corresponding site vector is inside the normal cone of the clustering. Further, these areas contain sites of a representative for all equivalence classes of site vectors in the normal cone. For the single-shape partition polytope, these areas depict representatives of all site vectors for which the respective clusterings are optimal constrained LSAs, see Figure \ref{fig:sitesareassingle}.

\subsection{Stability of Site Vectors}\label{subsec-ballincone}

\noindent{\bf Overview.} We observed that clusterings with high volume are good for two reasons. They are likely to be computed for random sites and any site vector can be perturbed significantly without changing the clustering. Given $w(C)$ and its normal cone, we now characterize a \textbf{most stable} site vector inducing $C$. After introducing our notion of stability, which depends on the choice of a $p$-norm, we provide an optimization problem whose optimal solution gives us a site vector with the highest possible stability for this clustering. Moreover, we present how the optimal solutions for different $p$-norms are connected and how one can obtain an approximate solution for any $p$-norm by using the Euclidean norm.

\vspace*{0.2cm}


\begin{definition}[Stability of a Site Vector]\label{stablesites}
Let $p \in [1;\infty]$. For $a \in B(N_{\mathcal{P}^{\pm}}(w(C))) \subseteq \Sphere^{d \cdot k}$ the \textbf{BHR stability of the site vector $a$ w.r.t. $p$} is
\begin{equation*}
\tau^p_{\pm}(a) := \max\{ \delta > 0 \ | \ \bar{a} \in N_{\mathcal{P}^{\pm}}(w(C)) \text{ for all } \norm{a - \bar{a}}_p \leq \delta\}.
\end{equation*}
For $a \in B(N_{\mathcal{P}^{=}}(w(C))) \subseteq \Sphere^{d \cdot k}$ we call
\begin{equation*}
\tau^p_{=}(a) := \max\{ \delta > 0 \ | \ \bar{a} \in N_{\mathcal{P}^{=}}(w(C)) \text{ for all } \norm{a - \bar{a}}_p \leq \delta\}
\end{equation*}
the \textbf{LSA stability of the site vector $a$ w.r.t. $p$}.
\end{definition}

The BHR and LSA stability, $\tau^p_{\pm}$ and $\tau^p_{=}$, measure how much we can perturb the site vector $a$ within the respective normal cone w.r.t. the $p$-norm without changing the induced clustering. Of special interest are $p = 2, \infty$. Whereas the Euclidean norm weighs the perturbation of all sites equally, the infinity norm considers the highest possible perturbation of one individual site. We highlight the differences after stating our main theorem.

Note that the above definition can be extended to all site vectors by inserting the corresponding representative of the equivalence class into $\tau^p_{\star}$, $\star \in \{ \pm , = \}$. Geometrically, a ``most stable'' site vector w.r.t $p$ can be described by dropping a $p$-norm unit ball into the normal cone with $0 \in \R^{d \cdot k}$ as gravity center and computing where it gets stuck due to being blocked by the facets of the cone. The center of this unit ball then is a vector which lies ``most centrally'' within the normal cone. Figure \ref{fig:facetblockinggravitypolytope} illustrates two examples of this approach.
\begin{figure}
\centering
\begin{tikzpicture}
	\coordinate (H1) at (-2/2.2360679,1/2.2360679);
	\coordinate (H2) at (3/3.1622776,1/3.1622776);
	\draw[name path=hilfea, white] (H1) -- ($ (H1) + (2,4) $);
	\draw[name path=hilfeb, white] (H2) -- ($ (H2) + (-1.5,4.5) $);
	\path[name intersections={of=hilfea and hilfeb}];
	\coordinate (Z) at (intersection-1);
	
	\coordinate (O) at (0,0);
	\coordinate (A) at (3,6);
	\coordinate (B) at (-2,6);
	
	\fill[blue!15] (O) -- (A) -- (B) -- cycle;
	\draw[blue] (O) -- (A);
	\draw[blue] (O) -- (B);
	\fill[blue] (O) circle (0.05) node[left, blue]{$0$};
	
	\fill[red!15] (Z) circle [radius=1cm];
	\fill[red] (Z) circle (0.05) node[right] {$z^{(2)}$};
	\draw[red] (Z) circle [radius=1cm];	
\end{tikzpicture}
\hspace{1.3cm}
\begin{tikzpicture}
	\coordinate (O) at (0,0);
	\coordinate (A) at (3,6);
	\coordinate (B) at (-2,6);
	
	\fill[blue!15] (O) -- (A) -- (B) -- cycle;
	\draw[blue] (O) -- (A);
	\draw[blue] (O) -- (B);
	\fill[blue] (O) circle (0.05) node[left, blue]{$0$};
	
	\coordinate (Z) at (1/5,17/5); 
	\coordinate (1) at (6/5,12/5);
	\coordinate (2) at (6/5,22/5);
	\coordinate (3) at (-4/5,22/5);
	\coordinate (4) at (-4/5,12/5);
	
	\fill[red!15] (1) -- (2) -- (3) -- (4) -- cycle;
	\fill[red] (Z) circle (0.05) node[right] {$z^{(\infty)}$};
	\draw[red] (1) -- (2) -- (3) -- (4) -- cycle;
	
\end{tikzpicture}
\caption{Unit balls with centers $z^{(2)}$ and $z^{(\infty)}$ blocked by facets of the normal cone.}
\label{fig:facetblockinggravitypolytope}
\end{figure}
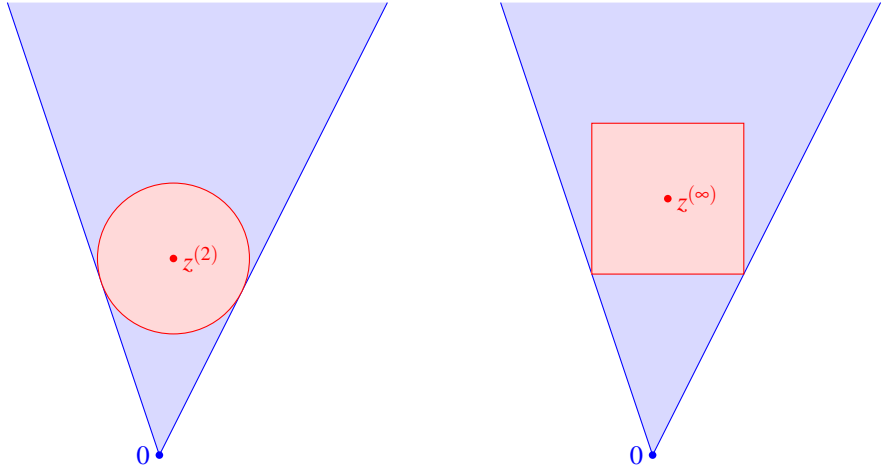


\begin{theorem}\label{solveunitballincone}
Let $p \in[1;\infty]$, $\star \in \{\pm,=\}$ and $w(C) \in \mathcal{P}^{\star}$ be a vertex with incident edges $v_1,\dots,v_t$. Then the optimal solution of the following optimization problem yields a site vector inducing the clustering $C$ with highest possible stability w.r.t. $p$.
\begin{equation} \label{optproblemgravitypolyhedron}
\begin{array}{rll}
\min & \norm{z}_2^2  & \\
\text{s.t.} & v_j^T z \leq \gamma^p_j &   \, \forall \, j \in [t], \\
 & z \in \R^{d \cdot k} & 
\end{array}
\end{equation}

\noindent with $\gamma^p_1,\dots,\gamma^p_t$ being the optimal objective values of the problems
\begin{equation} \label{optproblemgravitydistance}
\begin{array}{rl}
\min & v_j^T z \\
\text{s.t.} & z \in \B^p_1(0), \\
 & z \in \R^{d \cdot k}
\end{array}
\end{equation}
for each $j \in [t]$. Here $\B^p_{\lambda}(b) := \{ y \in \R^{d \cdot k} \ | \ \norm{y - b}_p \leq \lambda \}$ denotes the closed $p$-norm ball with center $b \in \R^{d \cdot k}$ and radius $\lambda > 0$.
\end{theorem}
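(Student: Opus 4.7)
The plan is to connect the geometric stability of Definition \ref{stablesites} to the optimization problem via polarity and the scale invariance of power diagrams (Theorem \ref{scalepowerdiagram}).

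First, I would make the normal cone explicit via its facets. Since the edges incident to a vertex of a polytope are exactly the outer normals to the facets of its normal cone at that vertex, the cone $N_{\mathcal{P}^{\star}}(w(C))$ admits (up to its lineality space) the halfspace description
\begin{equation*}
N_{\mathcal{P}^{\star}}(w(C)) = \{z \in \R^{d \cdot k} \mid v_j^T z \leq 0 \text{ for all } j \in [t]\}.
\end{equation*}
This reduces any containment question to $t$ linear inequalities. The caveat about the lineality space is inert, since $\mathcal{P}^{\star}$ lies in a $d(k-1)$-dimensional affine subspace, all site vectors $a$ and $a + u$ with $u$ in the lineality space induce the same clustering, and all distances involved are invariant under this.

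Second, I would translate the containment $\B^p_\delta(z) \subseteq N_{\mathcal{P}^{\star}}(w(C))$ into linear constraints on $z$. Writing $\B^p_\delta(z) = z + \delta \B^p_1(0)$, containment in the halfspace $\{w \mid v_j^T w \leq 0\}$ is equivalent to
\begin{equation*}
v_j^T z + \delta \max_{u \in \B^p_1(0)} v_j^T u \leq 0.
\end{equation*}
Because $\B^p_1(0)$ is symmetric about the origin, $\max_{u \in \B^p_1(0)} v_j^T u = -\min_{u \in \B^p_1(0)} v_j^T u = -\gamma_j^p$ (where $\gamma_j^p$ is the optimal value of \eqref{optproblemgravitydistance}), so the ball fits in the cone if and only if $v_j^T z \leq \delta \gamma_j^p$ for every $j \in [t]$. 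Specializing to $\delta = 1$ recovers exactly the constraints of \eqref{optproblemgravitypolyhedron}, while for arbitrary $\delta$ the homogeneity in $z$ and $\delta$ is manifest.

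Third, I would use scale invariance to identify the optimization problem with maximizing stability on the sphere. If $z$ is feasible for \eqref{optproblemgravitypolyhedron}, rescaling by $1/\norm{z}_2$ shows $\B^p_{1/\norm{z}_2}(z/\norm{z}_2) \subseteq N_{\mathcal{P}^{\star}}(w(C))$, hence $\tau^p_{\star}(z/\norm{z}_2) \geq 1/\norm{z}_2$, and by Theorem \ref{scalepowerdiagram} the vectors $z$ and $z/\norm{z}_2$ yield the same clustering $C$. Conversely, given any $a \in B(N_{\mathcal{P}^{\star}}(w(C)))$ with $\tau^p_{\star}(a) = \delta$, the scaled vector $z := a/\delta$ satisfies the constraints of \eqref{optproblemgravitypolyhedron} with $\norm{z}_2 = 1/\delta$. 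Thus maximizing $\tau^p_{\star}$ over the base of the cone is equivalent to minimizing $\norm{z}_2$, or equivalently $\norm{z}_2^2$, over the feasible region of \eqref{optproblemgravitypolyhedron}, and the optimal $z^*$ (which is necessarily nonzero since $\gamma_j^p < 0$ rules out $z = 0$) gives a most stable site vector for $C$.

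Main obstacle: the crux is the duality step that identifies $\max_{u \in \B^p_1(0)} v_j^T u$ with $-\gamma_j^p$ via the central symmetry of the $p$-ball, which is precisely what allows the auxiliary linear/convex programs \eqref{optproblemgravitydistance} to encode the $p$-norm support radii uniformly for all $p \in [1;\infty]$. The rest is a bookkeeping exercise in scaling; the only nonroutine point beyond that is the already-handled lineality space issue, which the author has addressed in the discussion of the restriction to the affine hull of $\mathcal{P}^{\star}$.
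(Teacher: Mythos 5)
Your proposal is correct and follows the same overall architecture as the paper's proof: model the most stable site vector as the center of a $p$-norm unit ball contained in the normal cone attracted toward the origin, linearize the containment via the auxiliary problems (\ref{optproblemgravitydistance}), and use positive scaling to identify minimizing $\norm{z}_2$ with maximizing $\tau^p_{\star}$ over the base of the cone. The one place where you genuinely diverge is the linearization step. The paper starts from distance constraints $dist(z,F_j)_p \geq 1$ to the facets, shows that $dist(z,\lin(F_j))_p \geq 1$ is equivalent to membership in the union $H^{\leq}_{(v_j,\gamma^p_j)} \cup H^{\geq}_{(v_j,-\gamma^p_j)}$, and then discards the second half-space by decomposing $z$ over an orthogonal basis $\{v_j, f^j_1,\dots,f^j_{d\cdot k-1}\}$ and observing that the $v_j$-coefficient is nonpositive inside the cone. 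You instead write $\B^p_\delta(z)=z+\delta\B^p_1(0)$ and use the support function together with the central symmetry of the $p$-ball to get $\B^p_\delta(z)\subseteq H^{\leq}_{(v_j,0)}$ iff $v_j^Tz\leq\delta\gamma^p_j$ directly. Your route is cleaner: it never produces the spurious second half-space, so it needs no argument to eliminate it, and it makes the homogeneity in $\delta$ explicit, which is exactly what the scaling step then exploits. What the paper's longer detour buys is the discussion, immediately after its proof, of why the same recipe fails for lower-dimensional faces: there the set $\{z \mid dist(z,\lin(F))_p\geq 1\}$ is not a union of two half-spaces, and the paper's explicit treatment of that dichotomy sets up the counterexample. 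Both arguments are complete and arrive at the same constraints $v_j^Tz\leq\gamma^p_j$.
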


Minimizing the squared Euclidean norm corresponds to dropping the $p$-norm unit ball into the cone with the origin as gravity center that attracts the ball. The constraints ensure that the ball remains inside the cone. We postpone a proof of the theorem to Section \ref{sec-proofoptproblem}.

The stability of an optimal solution $z^{(p)}$ of (\ref{optproblemgravitypolyhedron}) is $\tau^p_{\star}(z^{(p)}) = \frac{1}{\norm{z^{(p)}}_2}$. Problem (\ref{optproblemgravitypolyhedron}) is a quadratic optimization problem with linear constraints and the auxiliary problems (\ref{optproblemgravitydistance}) are linear optimization problems over a convex set. 
The edges $v_j$ encode single (cyclical) movements for all $j \in [t]$ (Theorem \ref{boundedshapepartitionedges} and Corollary \ref{singleshapepartitionedges}). Note that there might be exponentially many edges and, thus, we might have to solve exponentially many auxiliary problems.

Problem (\ref{optproblemgravitypolyhedron}) models the facets of the normal cone blocking the unit ball. We justify our approach that the ball is blocked by facets, rather than lower-dimensional faces of the cone, with a short example in Section \ref{sec-proofoptproblem}.

Fix $p \in [1;\infty)$ and let $z^{(p)} :=(z_1^T,\dots,z_k^T)^T \in \R^{d \cdot k}$ be an optimal solution of (\ref{optproblemgravitypolyhedron}). Then we can perturb one site, say $z_1 \in \R^d$, within a $p$-norm ball with radius 1 without changing the clustering. For $0 < \delta < k^{- \frac{1}{p}}$ and $\tilde{z} :=(\tilde{z}_1^T,\dots,\tilde{z}_k^T)^T \in \R^{d \cdot k}$ with $\tilde{z}_i \in \B^p_{\delta}(z_i) \subseteq \R^d$ for all $i \in [k]$, we obtain
\begin{equation*}
\norm{\tilde{z} - z^{(p)}}_p =  (\sum_{i = 1}^{k} \norm{\tilde{z}_i - z_i}_p^p )^{\frac{1}{p}} \leq (\sum_{i = 1}^{k} \delta^p )^{\frac{1}{p}} = \delta \cdot k^{\frac{1}{p}} < (\frac{1}{k})^{\frac{1}{p}} \cdot k^{\frac{1}{p}} = 1.
\end{equation*}

Thus, $\tilde{z} \in \interior(\B^p_1(z^{(p)})) \subseteq \interior(N_{\mathcal{P}^{\star}}(w(C)))$, i.e. we can perturb each site within a $p$-norm ball with e.g. radius $\delta := (k+1)^{- \frac{1}{p}} < k^{- \frac{1}{p}}$ without changing the clustering.

If $p = \infty$, we can even choose $0 < \delta < 1$, because then
\begin{equation*}
\norm{\tilde{z} - z^{(p)}}_{\infty} = \max\{ \norm{\tilde{z}_i - z_i}_{\infty} \ | \ i \in [k]\} \leq \delta < 1.
\end{equation*}

Next, we show how to obtain a feasible approximate solution of (\ref{optproblemgravitypolyhedron}). It is well-known that for all $p,q \in [1;\infty]$ there is a positive constant $c_{p,q} > 0$ only depending on $p,q$ and the dimension of the space such that $\norm{x}_p \leq c_{p,q} \norm{x}_q$ for all $x \in \R^{d \cdot k}$.

\begin{theorem} \label{pqnormegal}
Let $p \in [1;\infty]$, $\star \in \{\pm , = \}$ and $z^{(p)} \in \R^{d \cdot k}$ be an optimal solution of (\ref{optproblemgravitypolyhedron}). For all $q \in [1;\infty]$ the vector $z' = c_{p,q} z^{(p)}$ satisfies $\B^q_1(z') \subseteq N_{\mathcal{P}^{\star}}(w(C))$.
Moreover, its objective value satisfies $\norm{z'}^2_2 \leq \max\{c_{p,q},c_{q,p}\}^2 \norm{z^{(q)}}^2_2$ where $z^{(q)} \in \R^{d \cdot k}$ is the optimal solution of (\ref{optproblemgravitypolyhedron}) w.r.t. the $q$-norm.
\end{theorem}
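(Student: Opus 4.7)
The plan is to exploit two things: the normal cone $N_{\mathcal{P}^{\star}}(w(C))$ is a convex cone (closed under positive scaling), and between any two $\ell_p$-norms on $\R^{d\cdot k}$ we have the inclusion $\B^q_1(z) \subseteq \B^p_{c_{p,q}}(z)$ whenever $\norm{\cdot}_p \leq c_{p,q}\norm{\cdot}_q$. Before anything else, I would record the geometric meaning of problem (\ref{optproblemgravitypolyhedron}): the constraint $v_j^T z \leq \gamma_j^p$ is precisely the condition that the entire ball $\B^p_1(z)$ lies in the halfspace $\{a : v_j^T a \leq 0\}$ bounding the facet of $N_{\mathcal{P}^{\star}}(w(C))$ normal to the edge $v_j$, because $\gamma_j^p = \min_{u \in \B^p_1(0)} v_j^T u$. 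Consequently every feasible $z$, and hence $z^{(p)}$ in particular, satisfies $\B^p_1(z^{(p)}) \subseteq N_{\mathcal{P}^{\star}}(w(C))$.

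To prove the ball-containment claim I would scale by $c_{p,q} > 0$ to obtain $\B^p_{c_{p,q}}(z') \subseteq N_{\mathcal{P}^{\star}}(w(C))$ with $z' = c_{p,q} z^{(p)}$. For any $y \in \B^q_1(z')$, the norm inequality yields $\norm{y-z'}_p \leq c_{p,q}\norm{y-z'}_q \leq c_{p,q}$, so $y \in \B^p_{c_{p,q}}(z') \subseteq N_{\mathcal{P}^{\star}}(w(C))$, as required. The statement $d(z^{(p)}, z') = 0$ is then immediate because $z'$ is a positive multiple of $z^{(p)}$, so both represent the same equivalence class and normalize to the same point of $\Sphere^{d \cdot k}$.

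For the objective-value bound I would apply the same argument symmetrically. Starting from the optimal $z^{(q)}$ of the $q$-norm version of (\ref{optproblemgravitypolyhedron}), the same chain of scaling and norm comparison shows that $\B^p_1(c_{q,p} z^{(q)}) \subseteq N_{\mathcal{P}^{\star}}(w(C))$. Rewriting this containment as the halfspace constraints yields $v_j^T(c_{q,p} z^{(q)}) \leq \gamma_j^p$ for every $j$, so $c_{q,p} z^{(q)}$ is feasible for (\ref{optproblemgravitypolyhedron}). Optimality of $z^{(p)}$ then gives $\norm{z^{(p)}}_2 \leq c_{q,p}\norm{z^{(q)}}_2$, and combining with $z' = c_{p,q} z^{(p)}$ produces $\norm{z'}_2 \leq c_{p,q} c_{q,p}\norm{z^{(q)}}_2$. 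To close the bound I would invoke the elementary fact that between any two $\ell_p$-norms on $\R^{d\cdot k}$ one norm pointwise dominates the other; therefore for the optimal constants one of $c_{p,q}, c_{q,p}$ equals $1$, so $c_{p,q}c_{q,p} = \max\{c_{p,q},c_{q,p}\}$, and squaring yields the claim.

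The only real obstacle is bookkeeping: keeping straight which norm inequality is used in which direction, and verifying carefully that the containment $\B^p_1(z) \subseteq N_{\mathcal{P}^{\star}}(w(C))$ is \emph{equivalent} to feasibility in (\ref{optproblemgravitypolyhedron}), since this equivalence is what legitimizes the symmetric feasibility step used to bound $\norm{z^{(p)}}_2$ from above by $c_{q,p}\norm{z^{(q)}}_2$. Once that equivalence is in hand, the entire theorem reduces to cone scaling and standard norm-equivalence inequalities.
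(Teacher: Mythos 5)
Your proposal is correct and follows essentially the same route as the paper: cone scaling together with the inclusion $\B^q_1(0) \subseteq \B^p_{c_{p,q}}(0)$ gives the containment claim, and feasibility of the scaled vector $c_{q,p}\,z^{(q)}$ in the $p$-norm problem gives the objective bound. The only difference is organizational: the paper splits the objective bound into the cases $p<q$ and $p>q$ (using $\B^p_1(0)\subseteq\B^q_1(0)$ directly in one case), whereas you handle both at once via the symmetric feasibility argument plus the observation that $c_{p,q}c_{q,p}=\max\{c_{p,q},c_{q,p}\}$ because one of the tight constants equals $1$ --- a harmless streamlining that relies on the same equivalence between feasibility in (\ref{optproblemgravitypolyhedron}) and the ball containment $\B^p_1(z)\subseteq N_{\mathcal{P}^{\star}}(w(C))$ established in the proof of Theorem \ref{solveunitballincone}.
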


Note that an upper bound on the objective value of the approximate solution $z'$ implies a lower bound on its stability $\tau^p_{\star}(z')$ and that $d(z',z^{(p)}) = 0$, i.e. $z'$ and $z^{(p)}$ define the same separating power diagram.
If we choose $p = 2$, then the auxiliary problems (\ref{optproblemgravitydistance}) of Theorem \ref{solveunitballincone} have optimal objective values $\gamma_j = - \norm{v_j}_2$ for all $j \in [t]$. Problem (\ref{optproblemgravitypolyhedron}) then reduces to
\begin{equation} \label{optproblemeuclidean}
\begin{array}{rll}
\min & \norm{z}_2^2  & \\
\text{s.t.} & v_j^T z \leq - \norm{v_j}_2 &   \, \forall \, j \in [t], \\
 & z \in \R^{d \cdot k}. & 
\end{array}
\end{equation}

After computing an optimal solution $z^{(2)}$ of (\ref{optproblemeuclidean}), we get a site vector $z' = c_{2,p} z^{(2)}$ for any arbitrary $p \in [1;\infty]$ whose norm can be bounded from above by the norm of the most stable (w.r.t. $p$) site vector $z^{(p)}$ and a constant factor only depending on $p$ (Theorem \ref{pqnormegal}). Hence, we obtain a provable approximation.

\subsection*{Proof-of-Concept Computations}

We use Figures \ref{fig:sitesone2norm} -- \ref{fig:sitesinftynormsingle} to illustrate computational results for the the two clusterings of Figure \ref{fig:twopartitions}. We compute the optimal sites (crosses in respective colors), following the programs in Theorem \ref{solveunitballincone}, w.r.t. $p = 2, \infty$ for the all-shape and single-shape partition polytopes. The $6$-dimensional optimal solutions of (\ref{optproblemgravitypolyhedron}) were again scaled to Euclidean norm 4.
Figures \ref{fig:sitesone2norm} and \ref{fig:sitesone2normsingle} depict the area of possible perturbation without changing the clustering when only perturbing the first (black) site and keeping the other two sites fixed.

\begin{figure}
\centering
\includegraphics[scale=0.38]{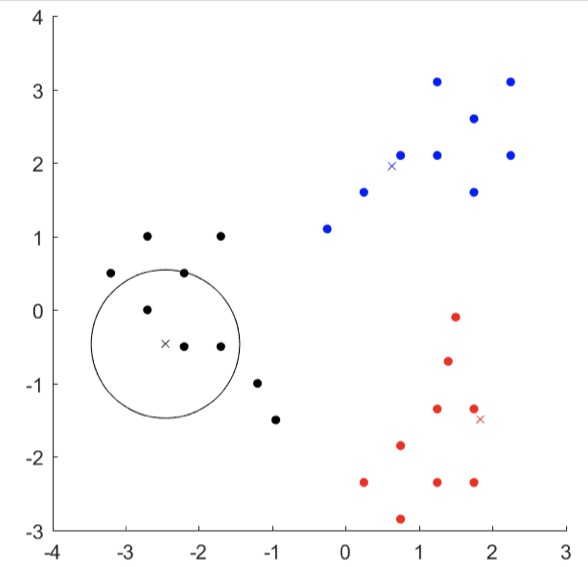}
\hspace*{0.3cm}
\includegraphics[scale=0.38]{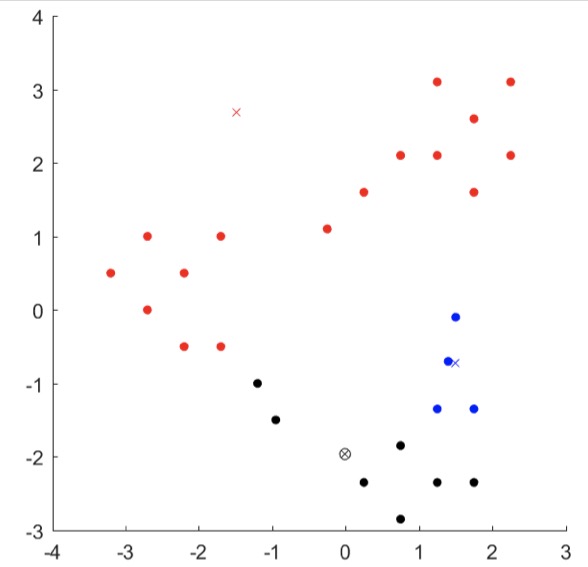}
\caption{The optimal sites w.r.t. $\mathcal{P}$ for $p=2$ with perturbation of the first site.}
\label{fig:sitesone2norm}
\end{figure}

\begin{figure}
\centering
\includegraphics[scale=0.38]{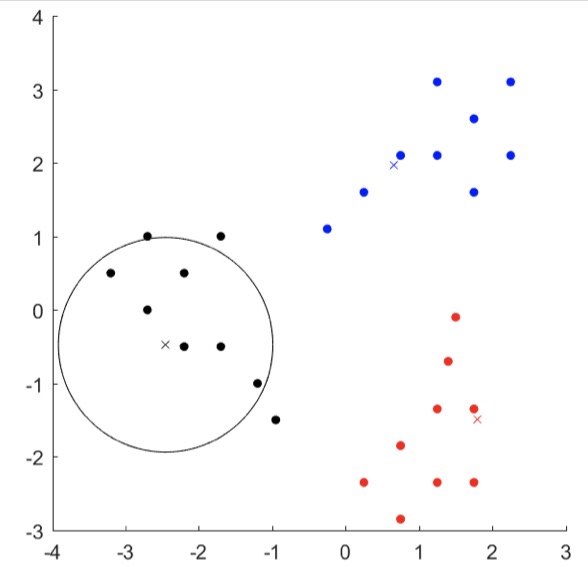}
\hspace*{0.3cm}
\includegraphics[scale=0.38]{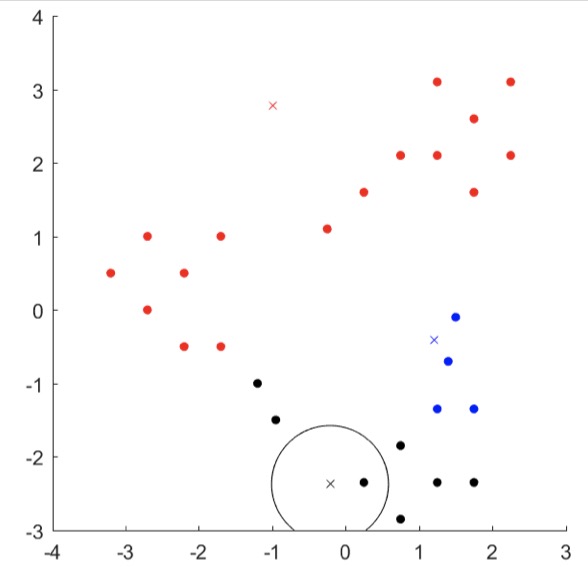}
\caption{The optimal sites w.r.t. $\mathcal{P}^{=}$ for $p=2$ with perturbation of the first site.}
\label{fig:sitesone2normsingle}
\end{figure}

In Figures \ref{fig:sites2norm} -- \ref{fig:sitesinftynormsingle}, all sites can be perturbed simultaneously within the drawn $p$-norm balls without changing the clustering.
The different sizes of the $p$-norm balls are due to scaling of the optimal solutions of (\ref{optproblemgravitypolyhedron}). Note that scaling the sites does not change the clustering, but of course, the scaling does affect the radius of the $p$-norm ball. In fact, the radius directly corresponds to the stability of the depicted optimal sites. The BHR stability of the left-hand clustering is $\tau^2_{\pm}(z^{(2)}) \approx 0.253$ and $\tau^{\infty}_{\pm}(z^{(\infty)}) \approx 0.139$, whereas for the right-hand clustering we get $\tau^{2}_{\pm}(z^{(2)}) \approx 0.018$ and $\tau^{\infty}_{\pm}(z^{(\infty)}) \approx 0.009$. Note that on the right-hand side of Figures \ref{fig:sites2norm} and \ref{fig:sitesinftynorm}, the balls around each cross that indicate the area of possible perturbation are so small that they become hard to see.

\begin{figure}
\centering
\includegraphics[scale=0.38]{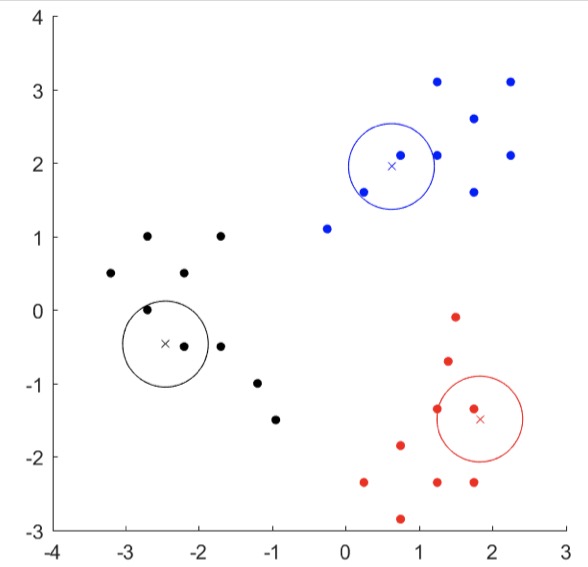}
\hspace*{0.3cm}
\includegraphics[scale=0.38]{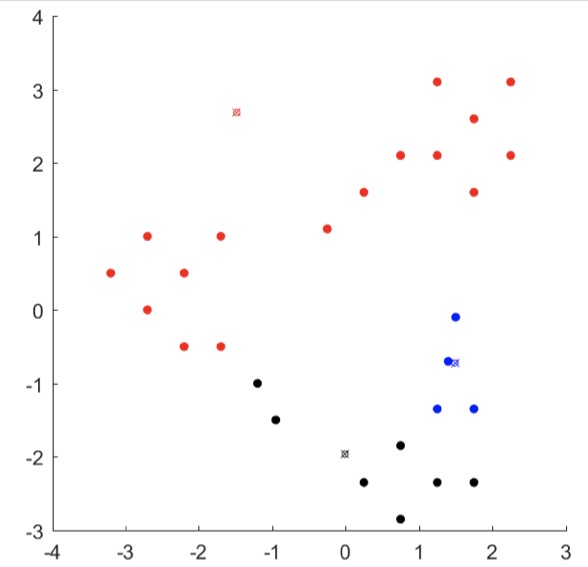}
\caption{The optimal sites w.r.t. $\mathcal{P}$ for $p = 2$ and the areas of possible perturbations.}
\label{fig:sites2norm}
\end{figure}

\begin{figure}
\centering
\includegraphics[scale=0.38]{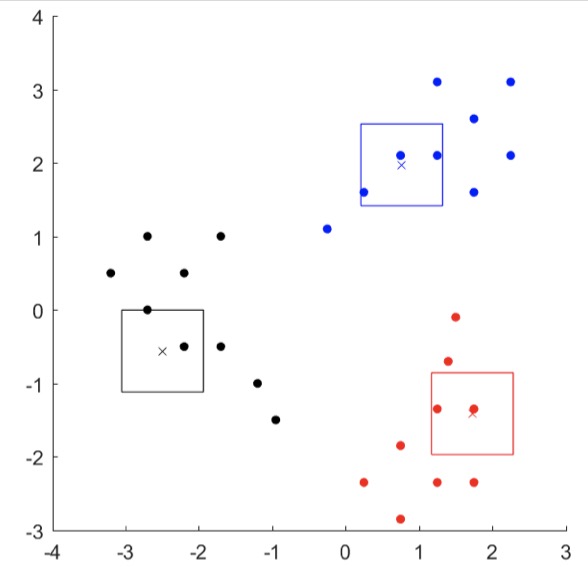}
\hspace*{0.3cm}
\includegraphics[scale=0.38]{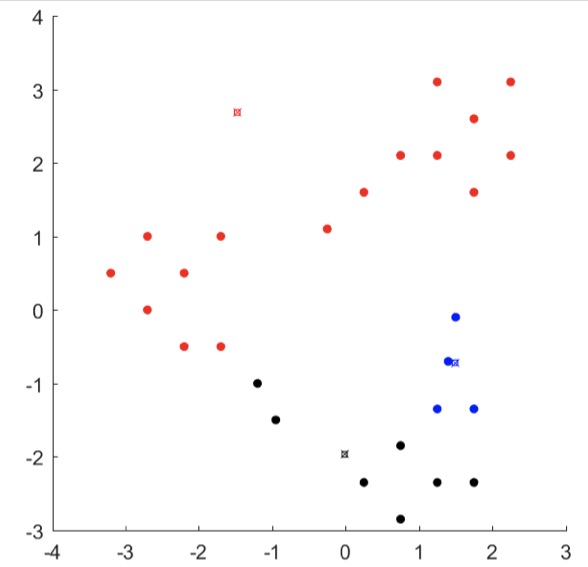}
\caption{The optimal sites w.r.t. $\mathcal{P}$ for $p = \infty$ and the areas of possible perturbations.} 
\label{fig:sitesinftynorm}
\end{figure}

Figures \ref{fig:sites2normsingle} and \ref{fig:sitesinftynormsingle} depict the areas of perturbation for the single-shape partition polytope. The difference between left-hand and right-hand clustering of the LSA stability is not as large as in the previous examples, since the cluster sizes are fixed. We obtain $\tau^2_{=}(z^{(2)}) \approx 0.365 $, $\tau^{\infty}_{=}(z^{(\infty)}) \approx 0.163$ (left) and $\tau^2_{=}(z^{(2)}) \approx 0.199$, $\tau^{\infty}_{=}(z^{(\infty)}) \approx 0.100$ (right). Nevertheless, one sees that the sites on the left-hand side can be perturbed more than the ones on the right-hand side.

\begin{figure}
\centering
\includegraphics[scale=0.38]{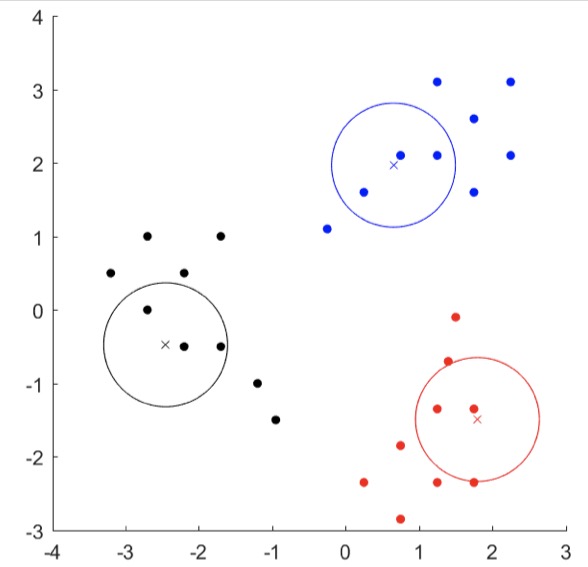}
\hspace*{0.3cm}
\includegraphics[scale=0.38]{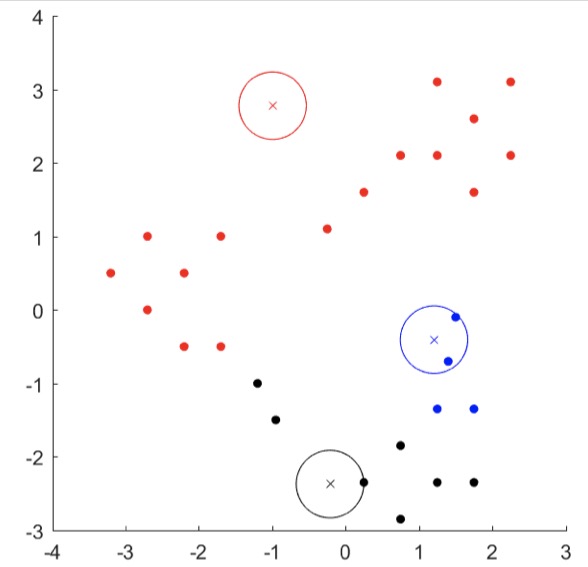}
\caption{The optimal sites w.r.t. $\mathcal{P}^{=}$ for $p = 2$ and the areas of possible perturbations.}
\label{fig:sites2normsingle}
\end{figure}
 
\begin{figure}
\centering
\includegraphics[scale=0.38]{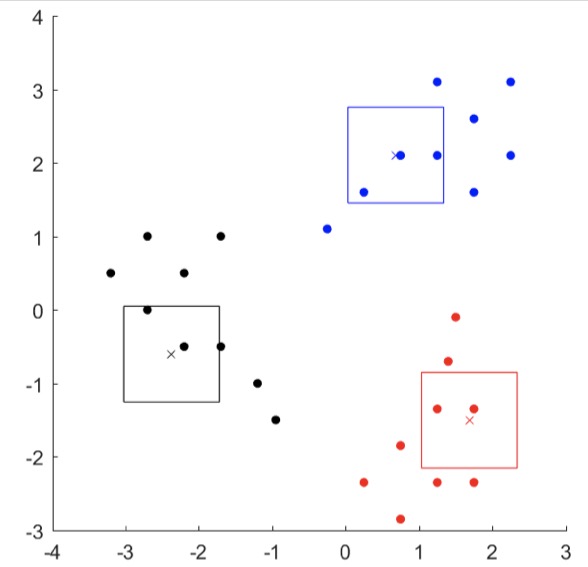}
\hspace*{0.3cm}
\includegraphics[scale=0.38]{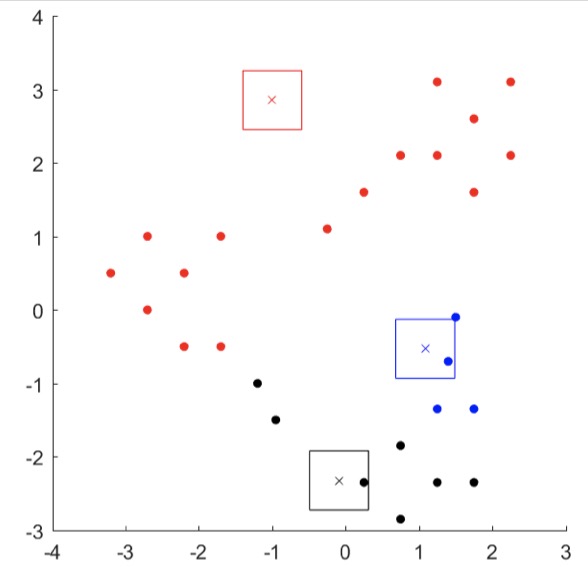}
\caption{The optimal sites w.r.t. $\mathcal{P}^{=}$ for $p = \infty$ and the areas of possible perturbations.}
\label{fig:sitesinftynormsingle}
\end{figure}

\section{Proofs}

In this section, we present proofs of theorems stated in Sections \ref{subsec-largecones} -- \ref{subsec-ballincone}.
We will prove equivalent statements for $\mathcal{P}^{\pm}$ and $\mathcal{P}^{=}$ just for the bounded-shape case, since they work analogously. It would suffice to replace ``$\pm$'' by ``$=$''.

\subsection{Proofs for Section \ref{subsec-largecones}}\label{sec-proofofmeasure}

\proof{Proof of Theorem \ref{measure}.}
A clustering $C$ is not a vertex clustering if and only if $\interior(N_{\mathcal{P}^{\pm}}(w(C)))=\emptyset$, yielding vol$(N_{\mathcal{P}^{\pm}}(w(C))) = 0$ and $\mu_{\pm}(C) = 0$. So let $C$ be a vertex clustering.

If $a \in N_{\mathcal{P}^{\pm}}(w(C))$, then $\frac{1}{\norm{a}_2} a \in N_{\mathcal{P}^{\pm}}(w(C)) \cap \Sphere^{d \cdot k}$. Suppose we are given a site vector $z \in \interior(N_{\mathcal{P}^{\pm}}(w(C))) \cap \Sphere^{d \cdot k}$. Then $z$ is contained in the relative interior of $B(N_{\mathcal{P}^{\pm}}(w(C)))$.
The base of a cone is spherically convex, so for all $a \in B(N_{\mathcal{P}^{\pm}}(w(C)))$ the geodesic ``connecting'' $z$ and $\frac{1}{\norm{a}_2} a$, i.e. the $\gamma$ satisfying $L(\gamma) = d(z,a)$, is contained in $B(N_{\mathcal{P}^{\pm}}(w(C)))$.
The area of $B(N_{\mathcal{P}^{\pm}}(w(C)))$, which is equal to vol$(N_{\mathcal{P}^{\pm}}(w(C)))$, can be computed by an integral over all directions (geodesics through $z$) of $d(z,a)$ with $a \in B(N_{\mathcal{P}^{\pm}}(w(C)))$ maximizing $d(z,\cdot)$ along the direction of $\gamma$. In fact, starting at $z$, we can walk into any direction along the unit sphere for a small distance and still stay in the interior of the normal cone. 
In particular, for every $a \in \bd(B(N_{\mathcal{P}^{\pm}}(w(C)))) \subseteq \bd(N_{\mathcal{P}^{\pm}}(w(C)))$, there is a geodesic $\gamma$ satisfying $L(\gamma) = d(z,a) > 0$, and for every geodesic $\gamma$ starting at $z$ we have
\begin{equation*}
\max\{ d(z,a) \ | \ a \in \gamma, \, a \in B(N_{\mathcal{P}^{\pm}}(w(C))) \} = d(z,\bar{a}) > 0
\end{equation*}
with $\{\bar{a}\} = \bd(B(N_{\mathcal{P}^{\pm}}(w(C)))) \cap \gamma$. We obtain
\begin{equation*}
\text{vol}(N_{\mathcal{P}^{\pm}}(w(C))) = \int\limits_{\bd(B(N_{\mathcal{P}^{\pm}}w(C))))} d(z,a) \ da ,
\end{equation*}
which, together with the fact that $\vol(\R^{d \cdot k})$ is equal to the area of the surface of $\Sphere^{d \cdot k}$, yields representation (\ref{integralmeasure}). 

Let $K \subseteq \R^{d \cdot k}$ be a cone with $\vol(K) = \vol(N_{\mathcal{P}^{\pm}}(w(C)))$ and consider a diffeomorphism $f: B(N_{\mathcal{P}^{\pm}}(w(C))) \mapsto B(K)$. Then $f(z)$ satisfies $d(f(z),a) > 0$ for all $a \in \bd(B(K))$, as open sets (in $B(N_{\mathcal{P}^{\pm}}(w(C)))$) are mapped to open sets (in $B(K)$) and vice versa. Furthermore, as $\vol(K) = \vol(N_{\mathcal{P}^{\pm}}(w(C)))$ and the bases $B(K)$ and $B(N_{\mathcal{P}^{\pm}}(w(C)))$ are spherically convex, we can perturb $f(z)$ and $z$ in equal measure without leaving the bases of the respective cones. This shows that $\mu_{\pm}$ as a quality measure is well-defined. \hfill \qed
\endproof

\subsection{Proofs for Section \ref{subsec-edgesofpolytope}}\label{sec-proofofedges}

In this section, we work towards a proof of Theorem \ref{boundedshapepartitionedges}. First, we need a few auxiliary lemmas.

Fix $C,C'$ to be the clusterings of Theorem \ref{boundedshapepartitionedges}. Let $CDG(C,C')$ be their clustering difference graph and let $\mathcal{M}$ be the set of movements corresponding to a decomposition of $CDG(C,C')$ into cycles and paths. Denote by $\mathcal{M}_{\geq 2} \subseteq \mathcal{M}$ the set of movements corresponding to cycles and non-trivial paths, i.e. paths (and cycles) containing at least two edges.

\begin{lemma}\label{vectorscollinear}
Let $a \in \R^{d \cdot k}$ such that $a^T w(C) = a^T w(C') > a^T w$ for all other vertices $w \in \mathcal{P}^{\pm} \setminus \{w(C) , w(C')\}$. Then $a^T w(M) = 0$ for all $M \in \mathcal{M}$.
\end{lemma}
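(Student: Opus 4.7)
The plan is to prove $a^T w(M) = 0$ for each $M \in \mathcal{M}$ by exhibiting, for each movement, a feasible clustering in $\mathcal{P}^{\pm}$ obtained from $C$ via $M$ that witnesses $a^T w(M) \le 0$, together with a feasible clustering obtained from $C'$ via $M^{-1}$ that witnesses $a^T w(M) \ge 0$; the two inequalities then pinch $a^T w(M)$ to zero because $a$ is maximized over $\mathcal{P}^{\pm}$ at both $w(C)$ and $w(C')$. The overall identity $\sum_{M \in \mathcal{M}} a^T w(M) = a^T(w(C') - w(C)) = 0$ (using $a^T w(C) = a^T w(C')$) will serve as a bridge to combine individual inequalities.

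For a cyclical movement $M$ the argument is immediate. Since $M$ is cyclical, applying it to $C$ preserves the shape, so $C^M$ is feasible and $w(C^M) = w(C) + w(M) \in \mathcal{P}^{\pm}$. The hypothesis yields $a^T w(C^M) \le a^T w(C)$, i.e., $a^T w(M) \le 0$. Applying $M^{-1}$ to $C'$ is again shape-preserving, hence feasible, and gives $a^T w(M) \ge 0$. Therefore $a^T w(M) = 0$ for every cyclical $M \in \mathcal{M}$.

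For a path movement $M: i_1 \to \cdots \to i_{t+1}$, applying $M$ changes the shape only at the endpoints (by $-1$ at $i_1$ and $+1$ at $i_{t+1}$), and analogously for $M^{-1}$ on $C'$. When none of the relevant cluster bounds are tight, the same two-sided argument as above applies directly and gives $a^T w(M) = 0$. When a bound is tight, say $|C_{i_1}| = s_{i_1}^-$, the inequality $|C'_{i_1}| \ge s_{i_1}^- = |C_{i_1}|$ forces the net flow at $i_1$ in $CDG(C,C')$ to be nonnegative; since each cycle in the decomposition contributes equally to the in- and out-degree at $i_1$, the set $\mathcal{M}$ must contain at least one path $M'$ ending at $i_1$. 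Replacing $M$ by the combined movement $M \cup M'$ cancels the shape change at $i_1$, and feasibility at the remaining affected endpoints can be checked case-by-case, if necessary by iterating the compensation at each newly exposed tight endpoint. The resulting inequality on $a^T(w(M) + w(M') + \cdots)$, combined with the already established vanishing of cyclical contributions and with the sum identity above, isolates $a^T w(M) \le 0$; the reverse inequality follows symmetrically starting from $C'$.

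The main obstacle is controlling the compensation argument for path movements when several cluster bounds are simultaneously tight: one must propagate the pairing consistently through the structure of $CDG(C,C')$ so that each newly introduced endpoint remains compensable. Termination is guaranteed by the finiteness of $\mathcal{M}$, and the feasibility of both $C$ and $C'$ ensures that the required compensating paths always exist along the way.
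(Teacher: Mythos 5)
Your core mechanism -- pinching $a^T w(M)$ between $\le 0$ (apply $M$ to $C$) and $\ge 0$ (apply $M^{-1}$ to $C'$), using that $a$ attains its maximum over $\mathcal{P}^{\pm}$ at both $w(C)$ and $w(C')$ -- is exactly the paper's proof, and your treatment of cyclical movements is complete. The trouble is in your handling of path movements with tight bounds. First, the case you labor over does not arise: in the decomposition of $CDG(C,C')$ into paths and cycles, a path starts at $i_1$ only when the out-degree at $i_1$ exceeds the in-degree, i.e.\ $\abs{C_{i_1}} > \abs{C'_{i_1}} \ge s^-_{i_1}$, so removing one point from $C_{i_1}$ cannot violate the lower bound; symmetrically $\abs{C_{i_{t+1}}} < \abs{C'_{i_{t+1}}} \le s^+_{i_{t+1}}$ at the end of the path, and the mirror statements hold for $M^{-1}$ applied to $C'$. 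This is the (implicit) reason the paper can simply assert that $\bar{C}$ is feasible and run the two-sided argument for every $M\in\mathcal{M}$ individually.

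Second, and more importantly, the compensation argument you propose for the (vacuous) tight case does not close: after concatenating $M$ with a compensating path $M'$ (and possibly further ones), the feasibility argument only yields an inequality on the \emph{sum} $a^T\bigl(w(M)+w(M')+\cdots\bigr)\le 0$, and the analogous bound from the $C'$ side gives $\ge 0$, so you learn that the sum vanishes -- which is no more than the global identity $\sum_{M\in\mathcal{M}} a^T w(M)=0$ already tells you when the group is all of $\mathcal{M}$, and in no case does it isolate $a^T w(M)=0$ for the individual movement $M$. If, say, $\mathcal{M}=\{M_1,M_2\}$ with both paths grouped together, you would only conclude $a^T w(M_1)=-a^T w(M_2)$. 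So you should delete the compensation machinery and instead record the degree-imbalance observation above, after which your first, direct argument applies verbatim to every movement in $\mathcal{M}$.
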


\proof{Proof.}
Let $M \in \mathcal{M}$ and suppose $a^T w(M) > 0$. Applying $M$ to $C$ yields a (feasible) clustering $\bar{C}$ with $a^T w(\bar{C}) = a^T (w(C) + w(M)) > a^T w(C)$ which is a contradiction. For $a^T w(M) < 0$ we obtain the same contradiction by applying $M^{-1}$ to $C'$, as this would yield a (feasible) clustering $\bar{C}$ with
\begin{equation*}
a^T w(\bar{C}) = a^T (w(C') + w(M^{-1})) = a^T w(C') - a^T w(M) > a^T w(C').
\end{equation*}
So $a^T w(M) = 0$. \hfill \qed
\endproof

This implies that all movements in $\mathcal{M}$ operate on the same clusters.

\begin{lemma}\label{operateonsameclusters}
There exists an index subset $I:= \{i_1,\dots,i_t\} \subseteq [k]$ such that all paths and cycles in $CDG(C,C')$ corresponding to (cyclical) movements in $\mathcal{M}$ contain all nodes in $I$ and no other.
\end{lemma}

\proof{Proof.}
By Lemma \ref{vectorscollinear}, the vectors of all movements in $\mathcal{M}$ are collinear. Recall that $X$ consists of distinct, non-zero data points. So, if movements would move items between different subsets of clusters, their corresponding vectors would leave different components of the clustering vectors unchanged. But then the vectors of these movements would not be collinear.

Let $M_1: C_{i_1} \rightarrow C_{i_2} \rightarrow \cdots \rightarrow C_{i_t}$ be a movement in $\mathcal{M}$ for some $2 \leq t \leq k$. Note that if $t \geq 3$ and $i_1 = i_t$, then $M_1$ is a cyclical movement. Suppose there exists a movement $M_2$ whose corresponding path in the CDG contains an edge $(i_l,i_j)$ with $j \not= l+1$ (cyclical indexing). Then the paths in $CDG(C,C')$ take one of the forms illustrated in Figure \ref{fig:cyclicalexchangeorder}. Figure \ref{fig:cyclicalexchangeorder1} shows the path corresponding to $M_1$.

\begin{figure}[h!]
\centering
\begin{subfigure}{14cm}
\centering
\begin{tikzpicture}[->,>=stealth',shorten >=1pt,auto,node distance=1.5cm,semithick]
	\node[circle, draw] (1) {$i_1$};
	\node[circle, draw] (2) [right of=1] {$i_2$};
	\node (3) [right of=2] {$\cdots$};
	\node[circle, draw] (i) [right of=3] {$i_l$};
	\node[ellipse, draw] (l) [right of=i] {$i_{l+1}$};
	\node (4) [right of=l] {$\cdots$};
	\node[circle, draw] (j) [right of=4] {$i_j$};
	\node (5) [right of=j] {$\cdots$};
	\node[circle, draw] (s) [right of=5] {$i_t$};
	
	\path 	(1) edge (2)
			(2) edge (3)
			(3) edge (i)
			(i) edge (l)
			(l) edge (4)
			(4) edge (j)
			(j) edge (5)
			(5) edge (s);
\end{tikzpicture}
\caption{\scriptsize Path corresponding to $M_1: C_{i_1} \rightarrow C_{i_2} \rightarrow \cdots \rightarrow C_{i_t}$.}
\label{fig:cyclicalexchangeorder1}
\end{subfigure}

\vspace*{0.5cm}

\begin{subfigure}{14cm}
\centering
\begin{tikzpicture}[->,>=stealth',shorten >=1pt,auto,node distance=1.5cm,semithick]
	\node[circle, draw] (1) {$i_1$};
	\node[circle, draw] (2) [right of=1] {$i_2$};
	\node (3) [right of=2] {$\cdots$};
	\node[circle, draw] (i) [right of=3] {$i_l$};
	\node[ellipse, draw] (l) [right of=i] {$i_{l+1}$};
	\node (4) [right of=l] {$\cdots$};
	\node[circle, draw] (j) [right of=4] {$i_j$};
	\node (5) [right of=j] {$\cdots$};
	\node[circle, draw] (s) [right of=5] {$i_t$};
	
	\path 	(1) edge (2)
			(2) edge (3)
			(3) edge (i)
			(i) edge [bend left] (j)
			(j) edge (5)
			(5) edge (s);
\end{tikzpicture}
\caption{\scriptsize Path corresponding to $M_2$ for $j > l+1$.}
\label{fig:cyclicalexchangeorder2}
\end{subfigure}

\vspace*{0.5cm}

\begin{subfigure}{14cm}
\centering
\begin{tikzpicture}[->,>=stealth',shorten >=1pt,auto,node distance=1.5cm,semithick]
	\node[circle, draw] (1) {$i_1$};
	\node[circle, draw] (2) [right of=1] {$i_2$};
	\node (3) [right of=2] {$\cdots$};
	\node[circle, draw] (j) [right of=3] {$i_j$};
	\node (4) [right of=j] {$\cdots$};
	\node[circle, draw] (i) [right of=4] {$i_l$};
	\node (5) [right of=i] {$\cdots$};
	\node[circle, draw] (s) [right of=5] {$i_t$};
	
	\path 	(1) edge (2)
			(2) edge (3)
			(3) edge (j)
			(j) edge [red] (4)
			(4) edge [red] (i)
			(i) edge [bend left, red](j);

\end{tikzpicture}
\caption{\scriptsize Cycle corresponding to $M_2$ for $j < l $ in red.}
\label{fig:cyclicalexchangeorder3}
\end{subfigure}
\vspace*{0.5cm}
\caption{Possible paths in the clustering difference graph.}
\label{fig:cyclicalexchangeorder}
\end{figure}
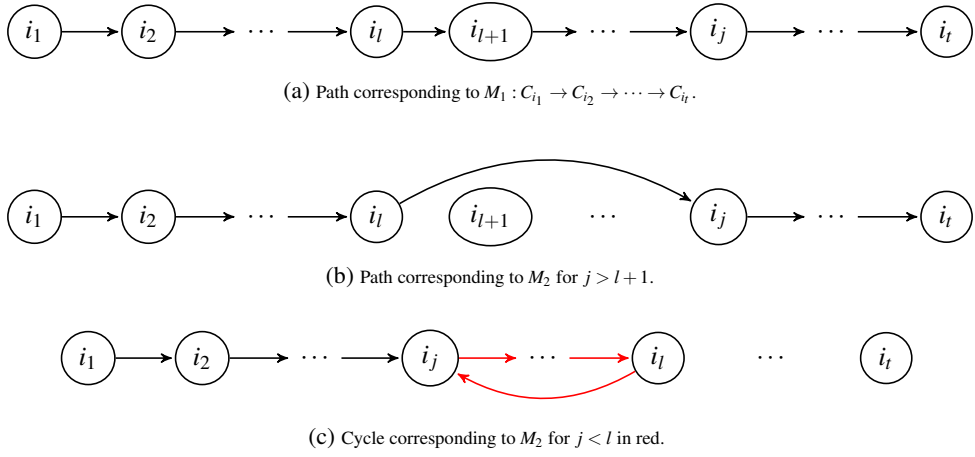

For $l+1 < j$, $M_2$ skips all clusters between $i_l$ and $i_j$, see Figure \ref{fig:cyclicalexchangeorder2}. For $l > j$ we obtain a cyclical movement skipping all clusters $C_{i_r}$ with $r < j$ or $r > l$, c.f. Figure \ref{fig:cyclicalexchangeorder3}. In both cases, there exists a (cyclical) movement that leaves different components of the clustering vectors unchanged. In particular, $M_2$ does not change cluster $i_{l+1}$, but $M_1$ does. Therefore, the vectors $w(M_1)$ and $w(M_2)$ cannot be collinear, contradicting Lemma \ref{vectorscollinear}. \hfill \qed
\endproof

\begin{lemma}\label{atmostonecyclicalexchange}
If no three points in $X$ lie on a single line, then $\abs{\mathcal{M}_{\geq 2}} \leq 1$.
\end{lemma}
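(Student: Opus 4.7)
My plan is proof by contradiction. Suppose $\mathcal{M}_{\geq 2}$ contains two distinct movements $M_1, M_2$; by Lemma~\ref{operateonsameclusters} both traverse the same cluster sequence $i_1, i_2, \ldots, i_t$ in the same direction. The key common ingredient I would establish is that for each $M \in \mathcal{M}$, applying $M$ alone to $C$ produces a feasible clustering $\bar C$: trivially when $M$ is cyclical, and when $M$ is a path because the second path in $\mathcal{M}$ already produces a shape change of at least $2$ at the endpoints, giving $|\bar C_{i_1}| = |C_{i_1}| - 1 \geq |C'_{i_1}| \geq s^-_{i_1}$ and analogously at $i_t$. Together with $a^T w(M) = 0$ from Lemma~\ref{vectorscollinear}, this places $w(\bar C)$ on the edge $F = \conv\{w(C), w(C')\}$, forcing $w(M) = \mu\, v$ with $v := w(C') - w(C)$ and $\mu \geq 0$. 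Since $\sum_{M \in \mathcal{M}} w(M) = v$, the coefficients $\mu_j$ form a convex combination summing to $1$.

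I would first dispose of the case in which one of the movements, say $M_1$, is cyclical. A component-wise inspection shows $w(M_1) \neq 0$ (otherwise consecutive distinct data points would coincide), so $\mu_1 > 0$. In fact $\mu_1 = 1$, because otherwise $w(M_1)$ would be a strict convex combination giving a non-vertex clustering vector on $F$, which one can rule out since cyclical application leaves the cluster sizes untouched and the intermediate clustering still equals $w(C)$ or $w(C')$ as a vertex. Then all remaining $\mu_j = 0$, contradicting the non-triviality of $w(M_2)$.

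So both $M_1, M_2$ are non-cyclical paths $i_1 \to \cdots \to i_t$ with $t \geq 3$; denote their labels by $x_1, \ldots, x_{t-1}$ and $y_1, \ldots, y_{t-1}$. Induction on the component index $\ell$, starting from $i_1$, applied to the identity $\mu_2\, w(M_1) = \mu_1\, w(M_2)$ yields the relation $x_\ell = \alpha\, y_\ell$ for all $\ell$, where $\alpha := \mu_1/\mu_2 > 0$. If $\alpha = 1$, then $x_\ell = y_\ell$ immediately contradicts the distinctness of edge labels in the CDG decomposition.

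The main obstacle is the subcase $\alpha \neq 1$. Here I would pass to an \emph{alternative} decomposition of $CDG(C,C')$ in which the labels of $M_1$ and $M_2$ are swapped at step $2$: the path $M_1'$ uses $x_1$ at step $1$ and $y_2$ at step $2$ (the remaining labels being distributed so as to keep a valid decomposition with the same cluster sequence). Since $M_1'$ is also a feasible path from $i_1$ to $i_t$, the same argument forces $w(M_1') = \mu_1'\, v$ for some $\mu_1' \geq 0$. Comparing the $i_2$-component of $w(M_1')$, which evaluates directly to $-y_2 + x_1$, with the value predicted by proportionality to $v$ combined with the already-established relation $x_\ell = \alpha y_\ell$, one obtains a scalar relation of the form $y_2 = \gamma\, y_1$ for some $\gamma \in \R$. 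But then the three distinct data points $y_1$, $y_2 = \gamma\, y_1$, and $x_1 = \alpha\, y_1$ all lie in $X$ on the line through the origin spanned by $y_1$, contradicting the hypothesis that no three points of $X$ lie on a single line. This closes the argument.
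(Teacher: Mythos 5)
Your overall strategy---establish the proportionality $w(M)=\mu_M\, v$ with $v=w(C')-w(C)$ for every movement $M$ in the decomposition, then compare two movements componentwise and swap labels between them to force three collinear data points---is the same mechanism as the paper's proof, and your path case is essentially sound. Your explicit justification that applying a single movement to $C$ yields a \emph{feasible} clustering whose vector must therefore lie on the edge $\conv\{w(C),w(C')\}$ is in fact more careful than the paper, which leaves this step implicit.

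There is, however, a genuine gap in your treatment of the cyclical case. You assert $\mu_1=1$ ``because otherwise $w(M_1)$ would be a strict convex combination giving a non-vertex clustering vector on $F$, which one can rule out.'' It cannot be ruled out: $\mathcal{P}^{\pm}$ is the convex hull of \emph{all} feasible clustering vectors, and nothing forbids a feasible clustering $\bar C$ whose vector lies in the relative interior of the edge $F$---the existence of such intermediate clusterings is exactly what a nontrivial decomposition would produce, so assuming they cannot occur is circular. Hence $\mu_1\in(0,1)$ is not excluded and the cyclical case remains open. This case is not vacuous: whenever $\abs{C}=\abs{C'}$ (precisely the case that the proof of Theorem \ref{boundedshapepartitionedges} delegates to this lemma), \emph{every} movement in the decomposition is a cycle, and your componentwise induction $x_\ell=\alpha y_\ell$ has no analogue there, because it is anchored at the endpoint $i_1$ where a path contributes the bare label $-x_1$; a cycle contributes a difference of two labels at every node, and the resulting system of proportionalities does not unravel. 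The paper's swap argument avoids this entirely by swapping only the \emph{outgoing} label at a node $i_l$, so that $w(M_1)$ and $w(M_1')$ have $i_l$-components $x_{i_{l-1}}-x_{i_l}$ and $x_{i_{l-1}}-x'_{i_l}$ sharing the common point $x_{i_{l-1}}$; their collinearity immediately places the three labels on one line, for paths and cycles alike. A secondary issue: you take both movements to run from $i_1$ to $i_t$ with common endpoints, but Lemma \ref{operateonsameclusters} as proved only forces all movements to respect the same cyclic order of clusters; the paper's proof explicitly allows $M_2$ to start at an intermediate cluster $i_l$ and wrap around to $i_{l-1}$, a configuration your induction (again anchored at a common start node) does not cover.
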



\proof{Proof.}
Assume by contradiction that $\abs{\mathcal{M}_{\geq 2}} \geq 2$, i.e. there are at least two non-trivial paths or cycles in $CDG(C,C')$. All movements in $\mathcal{M}_{\geq 2}$ operate on the same clusters (Lemma \ref{operateonsameclusters}) and their vectors are collinear (Lemma \ref{vectorscollinear}). Suppose there are two movements $M_1, M_2 \in \mathcal{M}_{\geq 2}$ of the form
\begin{align*}
M_1: &\quad C_{i_1} \stackrel{x_{i_1}}\longrightarrow \cdots \stackrel{x_{i_{l-2}}}\longrightarrow C_{i_{l-1}} \stackrel{x_{i_{l-1}}}\longrightarrow C_{i_l} \stackrel{x_{i_l}}\longrightarrow C_{i_{l+1}} \stackrel{x_{i_{l+1}}}\longrightarrow \cdots \stackrel{x_{i_{t-1}}}\longrightarrow C_{i_t} , \\
M_2: &\quad C_{i_l} \stackrel{x'_{i_l}}\longrightarrow C_{i_{l+1}} \stackrel{x'_{i_{l+1}}}\longrightarrow \cdots \stackrel{x'_{i_{t-1}}}\longrightarrow C_{i_t} \stackrel{x'_{i_t}}\longrightarrow C_{i_1} \stackrel{x'_{i_1}}\longrightarrow \cdots \stackrel{x'_{i_{l-2}}}\longrightarrow C_{i_{l-1}}
\end{align*}
for some $3 \leq t \leq k$ and $l \in [t]$ (cyclical indexing). Note that, if $i_1 = i_t$ or $i_{l-1} = i_l$, then $M_1$ or $M_2$ corresponds to a cycle, respectively. In this case, the element $x'_t$, respectively $x_{i_{l-1}}$, does not exist. Both paths corresponding to $M_1$ and $M_2$ have to contain the same nodes in the same order, otherwise we would obtain a contradiction as in the proof of Lemma \ref{operateonsameclusters}.

Since every $x \in X$ is moved at most once, we know $x_{i_r} \not= x'_{i_j}$ for all $r \in [t-1]$ and $j \in [t] \setminus\{l-1\}$. Consider the part of $CDG(C,C')$ that contains both paths, c.f. Figure \ref{fig:nocyclicalexchange1}. We can construct two movements $M'_1$ and $M'_2$, as explained in Figure \ref{fig:nocyclicalexchange}.

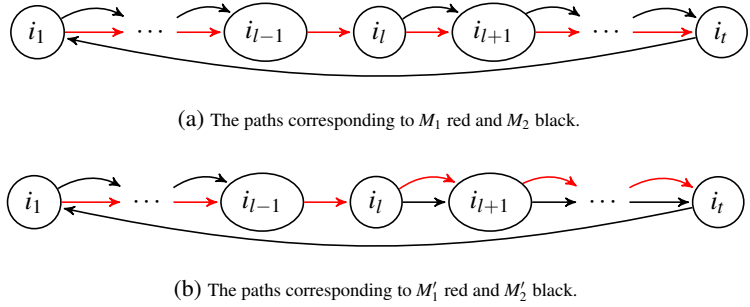
\begin{figure}[h]
\centering
\begin{subfigure}{14cm}
\centering
\begin{tikzpicture}[->,>=stealth',shorten >=1pt,auto,node distance=1.5cm,semithick]
	\node[circle, draw] (1) {$i_1$};
	\node (2) [right of=1] {$\cdots$};
	\node[ellipse, draw] (i) [right of=2] {$i_{l-1}$};
	\node[circle, draw] (i1) [right of=i] {$i_l$};
	\node[ellipse, draw] (i2) [right of=i1] {$i_{l+1}$};
	\node (3) [right of=i2] {$\cdots$};
	\node[circle, draw] (s) [right of=3] {$i_t$};
	
	\path 	(1) edge [red](2)
			(2) edge [red] (i)
			(i) edge [red] (i1)
			(i1) edge [red] (i2)
			(i2) edge [red](3)
			(3) edge [red] (s);
			
	\path	(i1) edge [bend left] (i2)
			(i2) edge [bend left] (3)
			(3) edge [bend left] (s)
			(s) edge [bend left=12] (1)
			(1) edge [bend left] (2)
			(2) edge [bend left] (i);		
\end{tikzpicture}
\caption{\scriptsize The paths corresponding to $M_1$ red and $M_2$ black.}
\label{fig:nocyclicalexchange1}
\end{subfigure}

\vspace*{0.5cm}

\begin{subfigure}{14cm}
\centering
\begin{tikzpicture}[->,>=stealth',shorten >=1pt,auto,node distance=1.5cm,semithick]
	\node[circle, draw] (1) {$i_1$};
	\node (2) [right of=1] {$\cdots$};
	\node[ellipse, draw] (i) [right of=2] {$i_{l-1}$};
	\node[circle, draw] (i1) [right of=i] {$i_l$};
	\node[ellipse, draw] (i2) [right of=i1] {$i_{l+1}$};
	\node (3) [right of=i2] {$\cdots$};
	\node[circle, draw] (s) [right of=3] {$i_t$};
	
	\path 	(1) edge [red](2)
			(2) edge [red] (i)
			(i) edge [red] (i1)
			(i1) edge (i2)
			(i2) edge (3)
			(3) edge (s);
			
	\path	(i1) edge [bend left, red] (i2)
			(i2) edge [bend left, red] (3)
			(3) edge [bend left, red] (s)
			(s) edge [bend left=12] (1)
			(1) edge [bend left] (2)
			(2) edge [bend left] (i);	
\end{tikzpicture}
\caption{\scriptsize The paths corresponding to $M'_1$ red and $M'_2$ black.}
\label{fig:nocyclicalexchange2}
\end{subfigure}
\vspace*{0.5cm}
\caption{Construction of $M'_1$ and $M'_2$ in $CDG(C,C')$.}
\label{fig:nocyclicalexchange}
\end{figure}

In Figure \ref{fig:nocyclicalexchange2}, we see that $M'_j$ equals $M_j$ for $j=1,2$ between the nodes $i_t$ and $i_{l}$. Between the $i_l$-th and the $i_t$-th cluster, $M'_1$ equals $M_2$ and $M'_2$ equals $M_1$.
The clustering we obtain after applying the movements $M'_1$ and $M'_2$ to $C$ equals the one after applying $M_1$ and $M_2$ to $C$. Thus we could replace the paths corresponding to $M_1$ and $M_2$ by the ones corresponding to $M'_1$ and $M'_2$ in the decomposition of $CDG(C,C')$. By Lemma \ref{vectorscollinear}, the vectors $w(M_1)$, $w(M_2)$, $w(M'_1)$ and $w(M'_2)$ are collinear. Let us now consider the components of $w(M_1)$ and $w(M'_1)$ corresponding to the changes of $\sigma_{i_l}$. We obtain for $w(M_1)$ and $w(M'_1)$ (in this order)
\begin{equation*}
 x_{i_{l-1}} - x_{i_l}, \quad x_{i_{l-1}} - x'_{i_l},
\end{equation*}
which are collinear. But this implies that $x_{i_{l-1}},x_{i_l}$ and $x'_{i_l}$ lie on a single line, a contradiction. Hence, $\abs{\mathcal{M}_{\geq 2}} < 2$. \hfill \qed
\endproof

In particular, $CDG(C,C')$ contains at most one cycle or path of edge-length greater than 1. 

\proof{Proof of Theorem \ref{boundedshapepartitionedges}.}
If $\abs{C} = \abs{C'}$, then the statement follows from Lemma \ref{atmostonecyclicalexchange} and the fact that $C \not= C'$ implies that $CDG(C,C')$ contains at least one cycle. So suppose $\abs{C} \not= \abs{C'}$.
If $CDG(C,C')$ contains a path $(i_1,i_2)-(i_2,i_3) - \cdots - (i_{t-1},i_t)$ with $t \geq 3$, then, by Lemma \ref{atmostonecyclicalexchange}, there does not exist any other paths or cycle. Now let $t =2$ and suppose there are three movements $M_1, M_2$ and $M_3$, which all are of the form $C_{i_1} \rightarrow C_{i_2}$.
By collinearity of the vectors $w(M_1)$, $w(M_2)$ and $w(M_3)$, the respective components corresponding to $\sigma_{i_2}$ are collinear. By definition, these entries equal the labels of the three edges in $CDG(C,C')$ -- which are collinear. This is a contradiction to the assumption that no three points in $X$ lie on a single line. \hfill \qed
\endproof

\proof{Proof of Corollary \ref{singleshapepartitionedges}.}
Note that $\abs{C} = \abs{C'}$ implies that $\mathcal{M} = \mathcal{M}_{\geq 2}$, since the CDG decomposes into cycles. Suppose there exist two cycles in $CDG(C,C')$ with corresponding cyclical movements $M_1$ and $M_2$ of the form
\begin{align*}
M_1: & \quad C_{i_1} \stackrel{x_{j_1}}\longrightarrow C_{i_2} \stackrel{x_{j_2}}\longrightarrow \cdots \stackrel{x_{j_{t-1}}}\longrightarrow C_{i_t} \stackrel{x_{j_t}}\longrightarrow C_{i_1}, \\
M_2: & \quad C_{i_1} \stackrel{x'_{j_1}}\longrightarrow C_{i_2} \stackrel{x'_{j_2}}\longrightarrow \cdots \stackrel{x'_{j_{t-1}}}\longrightarrow C_{i_t} \stackrel{x'_{j_t}}\longrightarrow C_{i_1},
\end{align*}
for some $t \geq 2$.
Again, we have $x_{j_l} \not= x'_{j_r}$ for all $l, r \in [t]$. Analogously to the proof of Lemma \ref{atmostonecyclicalexchange}, we can construct two different cycles $M'_1$ and $M'_2$ via $M'_j = M_j$ between $i_1$ and $i_t$ and $M'_1 = M_2$ and $M'_2 = M_1$ between clusters $C_{i_t}$ and $C_{i_1}$. The corresponding vectors $w(M_1)$, $w(M_2)$, $w(M'_1)$ and $w(M'_2)$ are collinear and, considering the $d$ entries corresponding to $\sigma_{i_1}$, we obtain (in this order)
\begin{equation*}
x_{j_t} - x_{j_1}, \quad x'_{j_t} - x'_{j_1}, \quad x'_{j_t} - x_{j_1}, \quad x_{j_t} - x'_{j_1},
\end{equation*}
which are collinear. This implies that $x_{j_1}$, $x'_{j_1}$, $x_{j_t}$ and $x'_{j_t}$ lie on a line, a contradiction. \hfill \qed
\endproof

\subsection{Proofs for Section \ref{subsec-ballincone}}\label{sec-proofoptproblem}

\proof{Proof of Theorem \ref{solveunitballincone}.}
Let $p \in [1;\infty]$, let $C$ be a vertex clustering and let $N_{\mathcal{P}^{\pm}}(w(C))$ be its normal cone with facets $F_1,\dots,F_t$. The ``gravity center'' at $0 \in \R^{d \cdot k}$ which attracts the $p$-norm ball can be modeled by minimizing the squared Euclidean norm $\norm{z}_2^2$. Using $dist(z,F_j)_p = \inf\{ \norm{ z - f^j }_p \ | \ f^j \in F_j \}$ to denote the distance of $z$ and the facet $F_j$ w.r.t. the $p$-norm, constraints $dist(z,F_j)_p \geq 1$ for all $j \in [t]$ guarantee that a  $p$-norm unit ball with center $z$ remains inside the normal cone $N_{\mathcal{P}^{\pm}}(w(C))$. This gives the
 optimization problem
\begin{equation} \label{optproblemgravity}
\begin{array}{rll}
\min & \norm{z}_2^2  & \\
\text{s.t.} & z \in N_{\mathcal{P}^{\pm}}(w(C)), &   \\
 & dist(z,F_j)_p \geq 1 &  \, \forall \, j \in [t], \\
 & z \in \R^{d \cdot k}. & 
\end{array}
\end{equation}

First we show that an optimal solution $z^{(p)}$ of (\ref{optproblemgravity}) has the highest stability w.r.t. $p$ among all vectors in $N_{\mathcal{P}^{\pm}}(w(C))$.

The stability of $z^{(p)}$ is equal to $\tau^p_{\pm}(z^{(p)}) = \frac{1}{\norm{z^{(p)}}_2} > 0$ by construction. Note that any site vector on the boundary of the normal cone has stability $0$. So let $z \in \interior(N_{\mathcal{P}^{\pm}}(w(C)))$ and let $\delta_z := \min\{dist(z,F_j)_p \ | \ j \in [t] \} > 0$ be the smallest $p$-norm distance to any of the facets. Then $\tau^p_{\pm}(z) = \frac{\delta_z}{\norm{z}_2}$. Moreover, the vector $\frac{1}{\delta_z} z$ is feasible for (\ref{optproblemgravity}) and, by optimality of $z^{(p)}$, we obtain
\begin{equation*}
\tau^p_{\pm}(z) = \frac{\delta_z}{\norm{z}_2} = \frac{1}{\frac{\norm{z}_2}{\delta_z}} \leq \frac{1}{\norm{z^{(p)}}_2} = \tau^p_{\pm}(z^{(p)}).
\end{equation*}

Next, we prove that (\ref{optproblemgravity}) is equivalent to (\ref{optproblemgravitypolyhedron}).
First we motivate the choices of $\gamma_1^p,\dots,\gamma_t^p$. The ``distance constraints'' of (\ref{optproblemgravity}) define two half spaces for each $j \in [t]$. Formally, we would have to replace $F_j$ by $\lin(F_j)$, but this becomes irrelevant due to only considering vectors in the normal cone. With $z^*_j \in B^p_1(0)$ being an optimal solution and $\gamma_j^p := v_j^T z^*_j < 0$ its objective value of (\ref{optproblemgravitydistance}), we obtain the equivalence
\begin{equation} \label{pnormdistancehalfspaces}
dist(z,\lin(F_j))_p \geq 1 \quad \Longleftrightarrow \quad z \in H^{\leq}_{(v_j,\gamma_j^p)} \cup H^{\geq}_{(v_j,- \gamma_j^p)}.
\end{equation}

Furthermore, for all $j \in [t]$ we know that $F_j = \lin(F_j) \cap N_{\mathcal{P}^{\pm}}(w(C))$, where $\lin(F_j) = \{v_j\}^{\perp}$ is a $(d \cdot k - 1)$-dimensional linear subspace, i.e. a hyperplane through the origin with normal vector $v_j$. In particular, we can write any $z \in \R^{d \cdot k}$ as linear combination of an orthogonal basis $\{ v_j,f^j_1,\dots,f^j_{d \cdot k -1 } \}$ of $\R^{d \cdot k}$ with $f^j_l \in \lin(F_j)$ for all $l \in [d \cdot k - 1]$, i.e.
\begin{equation*}
z = \nu^j_0 v_j + \sum_{l = 1}^{ d \cdot k -1} \nu^j_l f^j_l \quad \text{ with } \nu^j_0,\dots,\nu^j_{d \cdot k -1} \in \R.
\end{equation*}
For $z \in N_{\mathcal{P}^{\pm}}(w(C))$, we observe that $\nu^j_0 \leq 0$, since $v_j$ points along an edge direction, and thus away from the normal cone. For $z \in \interior(N_{\mathcal{P}^{\pm}}(w(C)))$, we observe that $\nu_0^j < 0$. Together with (\ref{pnormdistancehalfspaces}), we obtain 
\begin{equation*}
N_{\mathcal{P}^{\pm}}(w(C)) \cap \{ z \in \R^{d \cdot k } \ | \ dist(z,F_j)_p \geq 1 \} = N_{\mathcal{P}^{\pm}}(w(C)) \cap H^{\leq}_{(v_j, \gamma_j^p)}
\end{equation*}
for all $j \in [t]$. Rewriting the normal cone as
\begin{equation*}
N_{\mathcal{P}^{\pm}}(w(C)) = \bigcap_{j = 1}^{t} H^{\leq}_{(v_j,0)},
\end{equation*} 
and recalling that $\gamma_j^p < 0$, we see that the halfspace $H^{\leq}_{(v_j , \gamma_j^p)}$ is strictly contained in $H^{\leq}_{(v_j,0)}$ for all $j \in [t]$. Thus, 
\begin{equation*}
 \bigcap_{j = 1 }^{t} H^{\leq}_{(v_j, \gamma_j^p)} \subsetneq \bigcap_{j = 1}^{t} H^{\leq}_{(v_j,0)} = N_{\mathcal{P}^{\pm}}(w(C)).
\end{equation*}
Therefore, the constraint $z \in N_{\mathcal{P}^{\pm}}(w(C))$ is redundant and (\ref{optproblemgravity}) is equivalent to (\ref{optproblemgravitypolyhedron}). \hfill \qed
\endproof

We choose the $p$-norm distance to the facets of the cone as a constraint in (\ref{optproblemgravity}), because lower dimensional faces blocking the ball do not guarantee a stable site vector. To see this, consider the following $3$-dimensional example for $p =2$ and suppose the edges of the cone are used to block the unit ball rather than the facets.

Let $z=(0,0,2)^T$, $a^1=(0,\sqrt{3},3)^T$, $a^2 = (0,-\sqrt{3},3)^T$, $a^3=(\sqrt{3},0,3)^T \in \R^3$ and consider the cone spanned by $\{a^1,a^2,a^3\}$, see Figure \ref{fig:counterexampleedges} for cross-sections orthogonal to the $x_1$-axis (left) and orthogonal to the $x_3$-axis (right). Note that $a^1,a^2$ and $a^3$ are linearly independent. We have $z = \frac{1}{3} \cdot a^1 + \frac{1}{3} \cdot a^2 + 0 \cdot a^3$, which implies $z \in \bd(\pos(\{a^1,a^2,a^3\})$.
One can easily verify that $dist(z,\pos(\{a^j\}))_2 = \norm{z - \frac{1}{2} a^j}_2 = 1$ for all $j \in [3]$, i.e. $z$ is feasible and all constraints are tight, if we consider the edges instead of the facets of the cone in (\ref{optproblemgravity}). Note that we cannot decrease the (squared) norm of $z$ any further. However, this optimal solution has stability $\tau^2_{\star}(z) = 0$, i.e. this site vector cannot be perturbed in every direction without leaving the cone.

\begin{figure}
\begin{center}
\scalebox{1.15}{
\begin{tikzpicture}
	\coordinate (O) at (0,0);
	\coordinate (Z) at (0,2);
	\coordinate (A1) at (1.732050808,3);
	\coordinate (A2) at (-1.732050808,3);
	
	\fill[blue!15] (O) -- (A1) -- (A2) -- cycle;
	\fill[red!15] (Z) circle [radius=1cm];
	\draw[blue] (O) -- (A1);
	\draw[blue] (O) -- (A2);
	\draw[red] (Z) circle [radius=1cm];
	
	\fill[blue] (O) circle (0.05) node[below]{$0$};
	\fill[blue] ($0.5*(A1)$) circle (0.05) node[below right]{$\frac{1}{2} a^1$};
	\fill[blue] ($0.5*(A2)$) circle (0.05) node[below left] {$\frac{1}{2} a^2$};
	\fill[red] (Z) circle (0.05) node [above] {$z$};

	\coordinate (B) at (2,-0.5);
	\coordinate (B1) at (2.5,-0.5);
	\coordinate (B2) at (2,0);
	
	\draw[->, thick] (B) -- (B1) node[right] {$x_2$};
	\draw[->, thick] (B) -- (B2) node[above] {$x_3$};
\end{tikzpicture}
}
\hspace*{0.75cm}
\scalebox{1.15}{
\begin{tikzpicture}
	\coordinate (Z) at (0,0);
	\coordinate (A1) at (1.732050808,0);
	\coordinate (A2) at (-1.732050808,0);
	\coordinate (A3) at (0,1.732050808);
	
	\fill[blue!15] (A1) -- (A2) -- (A3) -- cycle;
	\draw[blue] (A1) -- (A2) -- (A3) -- cycle;
	\fill[red!15] (Z) circle [radius=1cm];
	\draw[red] (Z) circle [radius=1cm];
	\draw[blue, dashed] (A1) -- (A2);
	
	\fill[blue] (A1) circle (0.05) node[above right]{$\frac{2}{3} a^1$};
	\fill[blue] (A2) circle (0.05) node[above left] {$\frac{2}{3} a^2$};
	\fill[blue] (A3) circle (0.05) node[above] {$\frac{2}{3} a^3$};
	\fill[red] (Z) circle (0.05) node[below]{$z$};
	
	\coordinate (B) at (2.5,-1);
	\coordinate (B1) at (3,-1);
	\coordinate (B2) at (2.5,-0.5);
	
	\draw[->, thick] (B) -- (B1) node[right] {$x_2$};
	\draw[->, thick] (B) -- (B2) node[above] {$x_1$};
	
\end{tikzpicture}
}
\caption{Center $z$ of a stuck $2$-norm unit ball blocked by edges with stability $\tau^2_{\star}(z) = 0$.}
\label{fig:counterexampleedges}
\end{center}
\end{figure}
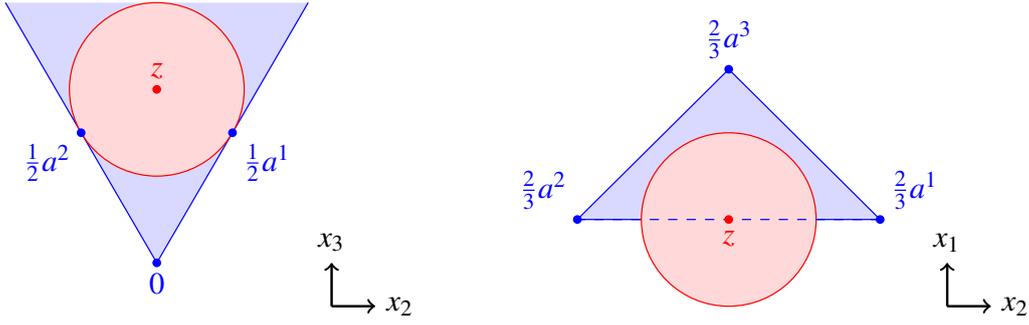

Similar examples can be constructed for any $p \in [1;\infty]$ and any $l$-dimensional face of $N_{\mathcal{P}^{\pm}}(w(C))$ that is not a facet. The case $l=0$ (the origin blocking the ball) does not provide any information, because every center has $p$-norm distance one to the origin. Let us consider the difference between using $l$-dimensional faces and using the facets $F_1,\dots,F_t$ of the normal cone to block the unit ball:

For every $j \in [t]$, the linear subspace $\lin(F_j)$ is a hyperplane that separates the underlying space into two halfspaces. In contrast, the linear subspace $\lin(F)$ of an $l$-dimensional face $F$ ($1 \leq l < d \cdot k - 1$) is not a hyperplane. This implies that a representation of the ``distance constraints'' $dist(z,\lin(F))_p \geq 1$ is not possible when using lower-dimensional faces. Therefore, the arguments of the proof of Theorem \ref{solveunitballincone} do not apply to lower dimensional faces. Informally, the center of the ball can ``orbit'' $\lin(F)$ at $p$-norm distance $1$ and eventually ``hit'' the boundary of the cone.


\proof{Proof of Theorem \ref{pqnormegal}.}
Let $q \in [1;\infty]$ with $q \not= p$, else the claim is trivial, because $c_{p,p} = 1$. By assumption $\B^p_1(z^{(p)}) \subseteq N_{\mathcal{P}^{\pm}}(w(C))$. Thus $\B^p_{\lambda} (\lambda z^{(p)} ) \subseteq N_{\mathcal{P}^{\pm}}(w(C))$ for all $\lambda > 0$. It suffices to show that $\lambda = c_{p,q}$ satisfies $\B^q_1(0) \subseteq \B^p_{c_{p,q}}(0)$.
For $y \in \B^q_1(0)$ we obtain
\begin{equation*}
1 \geq \norm{y}_q \geq \frac{1}{c_{p,q}} \norm{y}_p \quad \Longrightarrow \quad \norm{y}_p \leq c_{p,q}.
\end{equation*}
Hence, choosing $\lambda = c_{p,q} > 0$ yields $\B^q_1(c_{p,q} z^{(p)}) \subseteq \B^p_{c_{p,q}}(c_{p,q} z^{(p)} ) \subseteq N_{\mathcal{P}^{\pm}}(w(C))$.

Now let $z^{(q)}$ be the optimal solution of (\ref{optproblemgravitypolyhedron}) w.r.t. the $q$-norm, i.e. with constraints $dist(z,F_j)_q \geq 1$ for all $j \in [t]$. In the following, whenever we refer to (\ref{optproblemgravitypolyhedron}), we refer to the optimization problem w.r.t. the $p$-norm. Let $\gamma^q_1,\dots,\gamma^q_t$ denote the optimal objective values of (\ref{optproblemgravitydistance}) with $\B^q_1(0)$ as feasible region, and let $\gamma^p_1,\dots,\gamma^p_t$ denote the same for $\B^p_1(0)$. We distinguish between the two cases $p < q$ and $p > q$.

For $p < q$, we know $\B_1^p(0) \subseteq \B^q_1(0)$ and, therefore, $z^{(q)}$ satisfies $v_j^T z^{(q)} \leq  \gamma^q_j \leq  \gamma^p_j$ for all $j \in [t]$. Hence, $z^{(q)}$ is feasible for (\ref{optproblemgravitypolyhedron}) and $\B^p_1(z^{(q)}) \subseteq N_{\mathcal{P}^{\pm}}(w(C))$. The objective value of $z' = c_{p,q} z^{(p)}$ satisfies
\begin{equation} \label{pqnormegalcase1}
\norm{z'}_2^2 =  c_{p,q}^2 \norm{z^{(p)}}^2_2 \leq c_{p,q}^2 \norm{z^{(q)}}^2_2,
\end{equation}
because $z^{(p)}$ is an optimal solution of (\ref{optproblemgravitypolyhedron}).

For $p > q$, consider $\bar{z} = c_{q,p} z^{(q)}$, which satisfies $\B^p_1(\bar{z}) \subseteq N_{\mathcal{P}^{\pm}}(w(C))$. Note that $z^{(q)}$ is not feasible for (\ref{optproblemgravitypolyhedron}), but $\bar{z}$ is. Since $p > q$, we have $c_{p,q} = 1$, i.e. $z' = z^{(p)}$ and hence, by optimality of $z^{(p)}$,
\begin{equation} \label{pqnormegalcase2}
\norm{z'}^2_2 = \norm{z^{(p)}}^2_2 \leq \norm{ \bar{z}}^2_2 = c_{q,p}^2 \norm{z^{(q)}}^2_2.
\end{equation}
In total, since either $c_{p,q}$ or $c_{p,q}$ equals 1, (\ref{pqnormegalcase1}) and (\ref{pqnormegalcase2}) yield 
\begin{equation*}
\norm{z'}^2_2 \leq \max\{c_{p,q},c_{q,p}\}^2 \norm{z^{(q)}}^2_2.
\end{equation*}This proves the claim.\hfill \qed
\endproof

\section{Concluding Remarks}\label{sec-outlook}

We would like to stress that all computations of the volume of the normal cone, the computations of edges of the normal cone, as well as the optimal solution of (\ref{optproblemgravitypolyhedron}) are challenging due to the possibly exponential number of edges incident to a vertex of the polytope. However, this significant computational effort is rewarded with deep insights into the structure and behavior of site vectors. The key takeaway is that clusterings of large volume, i.e. vertex clustering with large normal cones, are both good and likely to be found by clustering algorithms. The next natural steps include the design of approximation algorithms that trade small errors in the construction of normal cones for a significant reduction in computation times.

%
%
%




\bibliographystyle{plainnat}
\bibliography{GoodClusteringsHaveLargeVolume} 

\end{document}